\crefname{equation}{}{}
\numberwithin{equation}{section}
\numberwithin{figure}{section}
\newtheorem{theorem}{Theorem}[section]
\newtheorem{definition}[theorem]{Definition}
\newtheorem{lemma}[theorem]{Lemma}
\newtheorem{corollary}[theorem]{Corollary}
\newtheorem{remark}[theorem]{Remark}
\def\rme{{\mathrm e}}
\def\dif{{\mathrm d}}
\def\pdif{\partial}
\def\tm{{\widetilde m}}
\def\bpsi{{\bar\psi}}
\def\eps{{\varepsilon}}
\def\RR{{\mathbb R}}
\def\CC{{\mathbb C}}
\def\calE{{\mathcal E}}
\def\calF{{\mathcal F}}
\def\calH{{\mathcal H}}
\def\calQ{{\mathcal Q}}
\def\calJ{{\mathcal J}}
\def\Jm{\calJ_m}
\def\calL{{\mathcal L}}
\def\calO{{\mathcal O}}
\def\calT{{\mathcal T}}
\def\calA{{\mathcal A}}
\def\calM{{\mathcal M}}
\def\calR{{\mathcal R}}
\def\ess{{\rm ess}}
\def\LG{{\Lambda}}
\def\id{{\rm id}}
\def\hom{{\rm h}}
\def\Lspace{{L}}
\def\Hspace{{H}}
\def\Xspace{{X}}
\def\Yspace{{Y}}
\def\Cspace{{C}}
\def\Q{{Q}}
\def\bQ{\bar\Q}
\def\cQ{\check\Q}
\def\calX{{\mathcal X}}
\def\calY{{\mathcal Y}}
\def\Xk{\calX_\kappa}
\def\Yk{\calY_\kappa}
\def\R{\mathbb{R}}
\def\Gammah{{\Gamma_h}}
\def\vphi{\varphi}
\begin{document}

\title{The stability of smooth solitary waves for the $b$-family of Camassa-Holm equations}

%
%
%
%
%
%
%
%

\author{
Ji Li
\thanks{School of Mathematics and Statistics, Huazhong University of Science and Technology, Wuhan 430074, China ({\tt liji@hust.edu.cn}).}
\and
Changjian Liu
\thanks{School of Mathematics (Zhuhai), Sun Yat-sen University, Zhuhai 519082, China ({\tt liuchangj@mail.sysu.edu.cn}).}
\and
Teng Long
\thanks { (Corresponding author) School of Mathematics and Statistics, Huazhong University of Science and Technology, Wuhan 430074, China ({\tt longteng@hust.edu.cn}).}
\and
Jichen Yang
\thanks{School of Mathematical Sciences, Harbin Engineering University, Harbin 150001, China ({\tt jichen.yang@hrbeu.edu.cn}).}
}

\date{\today}

%

\maketitle

\begin{abstract}
The $b$-family of Camassa-Holm ($b$-CH) equation is a one-parameter family of PDEs, which includes the completely integrable Camassa-Holm and Degasperis-Procesi equations but possesses different Hamiltonian structures. Motivated by this, we study the existence and the orbital stability of the smooth solitary wave solutions with nonzero constant background to the $b$-CH equation for the special case $b=1$, whose the Hamiltonian structure is different from that of $b\neq1$. We establish a connection between the stability criterion for the solitary waves and the monotonicity of a singular integral along the corresponding homoclinic orbits of the spatial ODEs. We verify the latter analytically using the framework for the monotonicity of period function of planar Hamiltonian systems, which shows that the smooth solitary waves are orbitally stable. In addition, we find that the existence and orbital stability results for $0<b<1$ are similar to that of $b>1$, particularly the stability criteria are the same. Finally, combining with the results for the case $b>1$, we conclude that the solitary waves to the $b$-CH equation is structurally stable under the variation of 
the parameter $b>0$.
\end{abstract}

\section{Introduction}\label{s:intro}
The $b$-family of Camassa-Holm ($b$-CH) equations
\begin{equation}\label{e:bCH}
u_t-u_{txx}+(b+1)uu_x=bu_xu_{xx}+uu_{xxx},
\end{equation}
where $u=u(t,x)\in\R$ is the velocity variable and $b$ is a real parameter, was introduced in \cite{DHH2002,DGH2001} by using transformations of the integrable hierarchy of KdV equations.
The $b$-CH equation includes two special cases: $b=2$ called Camassa-Holm (CH) equation and $b=3$ called Degasperis-Procesi (DP) equation. They are unidirectional models that arise from the asymptotic approximation of the incompressible Euler equations in the case of shallow water~\cite{Iva2007}, and both of them are completely integrable with bi-Hamiltonian structures. 
The existence and orbital stability of the smooth solitary waves and peakons for these two models have been well developed, and we refer the readers to some relevant literature, e.g.,\cite{CM2001,CS2000,CS2002} for the CH equation and \cite{Lin,LLW2020,LLW2023} for the DP equation. Due to the complete integrability and special Hamiltonian structures of the cases $b=2,3$, these results cannot be extended to the cases of other values of $b$. In \cite{LP2022}, the authors study the orbital stability of smooth solitary waves to \cref{e:bCH} for any $b>1$. They provide a sufficient condition of the orbital stability, and verify analytically for $b=2,3$ and numerically for other values of $b>1$ that the stability criterion is satisfied. In the most recent work \cite{LL2023}, the authors verify analytically for any values of $b>1$ that the aforementioned stability criterion can be satisfied.

\medskip
The main purpose of this paper is to study the orbital stability of the smooth solitary waves of \cref{e:bCH} for $0<b\leq 1$. The stability analysis is essentially based on a general framework that motivated by the approach of \cite{GSS1987}, which characterizes the critical points of Hamiltonian systems with symmetries and conserved quantities. Making use of this, we obtain a stability criterion for the orbitally stable solitary waves. 
We next establish a connection between the stability criterion and the monotonicity of an integral along the corresponding homoclinic orbit. This method has been used in \cite{LP2022} for the case of $b>1$, and the stability criterion has been proved to hold in \cite{LL2023}, and we extend it to the case of $0<b\leq 1$ in this paper; in particular, we provide a complete analysis for the case $b=1$ and a series of remarks and discussions for the case $0<b<1$.  However, it is quite technical to verify the stability criterion, i.e., determining the monotonicity.

\medskip
We next briefly introduce our method which is based upon the framework of period functions in planar systems which possess first integral. For instance, we consider the Hamiltonian system
\[
\frac{\dif x}{\dif t}=-\frac{\partial H(x,y)}{\partial y}, \quad \frac{\dif y}{\dif t}=\frac{\partial H(x,y)}{\partial x},
\]
which possesses a center, i.e., any orbit surrounding the center is closed in the phase space. The \emph{period function} is defined as the minimum period of each periodic orbit, and each orbit can be parameterized by $\gamma_h:=\{(x,y): H(x,y)=h,\, h\in\Omega \}$, where $h$ represents the energy level along the orbit. The period function, denoted by $T(h)$ and for $h\in\Omega$, is expressed by definition as
\[
T(h) :=\oint_{\gamma_h}\dif t=-\oint_{\gamma_h}\frac{1}{\partial_yH}\,\dif x.
\]
It is clear that there must exist zeros of $\partial_yH$ along the closed orbit $\gamma_h$, since $\dif x/\dif t=0$ at the left-/rightmost point of the periodic orbit in the $(x,y)$-plane.
Hence, the period function $T(h)$ is a singular integral and it is difficult to derive the expression of $T'(h)$, and thus determining the monotonicity of the period function is a challenge.
Fortunately, for some special systems, $T'(h)$ can be expressed as integral form using different methods depending on the form of the systems, and thus if the integrand has the fixed sign, then $T(h)$ will be monotone, cf., e.g., \cite{Coppel,Garijo,Gasull,LLLW2022,LLW2022,Jordi,Zhao}.
In some context of the nonlinear dynamics, e.g., \cite{DLL2021,Liu2020,LL2023},  the analysis of the orbital stability of the solitary wave can be converted to the determination of the monotonicity of an integral, denoted by e.g., $\Q(h)$, along the corresponding homoclinic orbit with the energy level $h$. In contrast to the fact that $T(h)$ is singular for all $h\in\Omega$, the integral $\Q(h)$ may be singular for $h$ approaches only the boundary of $\Omega$. Nevertheless, for both situations, a way of determining the monotonicity is to ``smoothen'' the singular integral first by making the product with $h$, then differentiating $h\Q(h)$ with respect to $h$ yields the expression of $\Q'(h)$ which has the integral form and its integrand has a definite sign.
In addition, we refer the readers to~\cite{Ehrman,Geyer1,Geyer2} for the stability of smooth periodic traveling waves of \cref{e:bCH}, which also make use of the framework of period functions in Hamiltonian systems.

\medskip

Our major goal in this paper is to understand the existence and orbital stability of the solitary waves to \cref{e:bCH} for the special case $b=1$, the Hamiltonian structure of which is different from that of the case $b\neq1$. Analyzing the steady states in moving frame, we prove that the traveling wave solutions to \cref{e:bCH} have the form of solitary waves, which correspond to the homoclinic orbits of the spatial ODEs. We find that the maximum of the solitary wave over the space is increasing in the wave speed $c$, and is decreasing in the background $\kappa$. The following theorem is the main result of this paper.

\begin{theorem}[Orbital stability for $b=1$]\label{t:main}
For $b=1$ and $\kappa\in(0, c/2)$, the smooth solitary waves of \cref{e:bCH} with the nonzero constant background $\kappa$ and the traveling wave speed $c$ are orbitally stable.
\end{theorem}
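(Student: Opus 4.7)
The plan follows the general strategy developed for $b>1$ in \cite{LP2022,LL2023}, but adapted to the distinct Hamiltonian structure at $b=1$. The argument decomposes into three parts: realising the smooth solitary wave as a constrained critical point of an energy--momentum functional, reducing orbital stability to a Grillakis--Shatah--Strauss type scalar monotonicity condition, and verifying this condition analytically.

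First, I would identify the conserved quantities available at $b=1$. The bi-Hamiltonian structure used at $b=2,3$ is not present here, but one can still construct an energy-type invariant $H[u]$ and a momentum-type invariant $M[u]$ compatible with the nonzero constant background $\kappa$. Writing $\phi = u - \kappa$, the profile $\phi_c$ associated with the solitary wave is a critical point of $E_c := H - cM$ on a suitable affine subspace of $H^1(\R)$. Linearising $E_c$ at $\phi_c$ produces a self-adjoint operator $L_c$ whose spectral structure — one simple negative eigenvalue, a one-dimensional kernel generated by spatial translation, and the remainder of the spectrum strictly positive and bounded away from zero — should follow from Sturm--Liouville analysis of the associated second-order ODE with coefficients specialised to $b=1$.

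Second, with the spectral picture in place, the GSS framework reduces orbital stability to determining the sign of $\frac{\dif}{\dif c} M[\phi_c]$. Following the procedure in \cite{LL2023}, I would convert this derivative into a line integral $\Q(h)$ along the homoclinic orbit $\Gammah$ of the spatial ODE arising from the travelling-wave ansatz $u(t,x) = \phi(x-ct)$. In this representation, the wave speed is traded for the energy level $h$ of the planar Hamiltonian system governing the profile, and the stability criterion becomes the strict monotonicity of $\Q(h)$ on the interval of $h$ that parameterises the smooth solitary waves with $0 < \kappa < c/2$.

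Third, and most importantly, I would verify the monotonicity of $\Q(h)$ using the smoothing device emphasised in the introduction: form the product $h\Q(h)$, differentiate under the integral sign, and invoke the period-function machinery for planar Hamiltonian systems to rewrite $\Q'(h)$ as a convergent integral whose integrand is an explicit rational function of the profile variable. The principal obstacle will be proving that this integrand has a definite sign throughout the entire period annulus, precisely on the parameter range $0 < \kappa < c/2$; I expect this to reduce, after a suitable algebraic factorisation, to an elementary but delicate inequality of the type handled in \cite{LLLW2022,LL2023}, with coefficients adjusted to $b=1$. Boundary behaviour at the centre of the planar system and at the homoclinic energy must also be checked to rule out degenerations. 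Once this sign condition is established, the GSS criterion is met and \cref{t:main} follows.
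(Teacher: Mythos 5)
Your plan follows essentially the same route as the paper: a Grillakis--Shatah--Strauss argument built on the $b=1$ Hamiltonian structure (the paper phrases it in the momentum variable $m=(1-\partial_x^2)u$ with $\calH(m)=\int_\R m(\ln m-\ln\kappa)-(m-\kappa)\,\dif x$ rather than in $u-\kappa$), Sturm--Liouville spectral analysis of the linearised operator, reduction of stability to the sign of $\frac{\dif}{\dif c}\calQ(\mu)$, and verification of that sign by trading $c$ for the energy level $h=2\kappa/(c-\kappa)$ of the planar profile system and applying the $h\Q(h)$ smoothing device from the period-function literature. The steps you leave as ``expected to work'' --- the explicit conserved quantities on nonzero background, the change of variables making the first integral separable, and the final sign check of the integrand --- are precisely the computations the paper carries out, and they do go through.
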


The proof of this theorem is straightforward from the combination of \cref{t:stable,l:equiv,l:criterion} presented below, 
which follows from the aforementioned frameworks.

\begin{remark}[Case $0<b<1$]
In the case $0<b<1$, we find that the analyses of the existence and orbital stability of the smooth solitary waves to \cref{e:bCH} are analogous to that of the case $b>1$ presented in \cite{LP2022}; in particular, the stability criterion is the same as that of $b>1$ given by \cite[Eq.~(16)]{LP2022}. Hence, combining with the proof of the stability criterion as in \cite{LL2023}, we conclude that the smooth solitary wave with the nonzero constant background $\kappa$ and the traveling speed $c$ is orbitally stable for $0<b<1$ and $0<\kappa<\frac{c}{b+1}$, where the background $\kappa$ means $u(\cdot,x)\to\kappa$ as $x\to\pm\infty$.  Further discussions on the existence and stability results for $0<b<1$ can be found in \cref{s:existbn1,r:stability}.
\end{remark}

Combining with the results of $b>1$ in \cite{LP2022}, 
we conclude that the solitary waves are structurally stable for $b>0$, i.e., the solitary wave exists and the phase portrait of the corresponding Hamiltonian system \cref{e:ODE1} does not change qualitatively under the variation of the parameter $b>0$; we also illustrate this in \cref{f:homovaryb}. 

\medskip
This paper is organized as follows. In \cref{s:Ham} we revisit the Hamiltonian formulation of \cref{e:bCH} for both cases $b\neq1$ and $b=1$. In \cref{s:existence} we prove the existence of smooth solitary wave solutions of \cref{e:bCH} for $b=1$ and give a discussion for the case of $0<b<1$. We next study the orbital stability of the solitary waves for $b=1$ in \cref{s:stability}, particularly obtaining a stability criterion. The verification is postponed to \cref{s:criterion}. Finally, we provide a short discussion in \cref{s:discussion}. The appendices contain some technical
aspects.

\section{Hamiltonian formulation}\label{s:Ham}

We revisit the Hamiltonian formulation of \cref{e:bCH} for both $b\neq 1$ and $b=1$, and refer the readers to \cite{DHH2003,LP2022} for more details. In case of $b=1$, however, the Hamiltonian of \cref{e:bCH} on nonzero background is different from that of zero background presented in \cite{DHH2003}, thus we particularly discuss this case below. It is conventional to introduce the momentum density
\begin{align}\label{e:momentum}
m := (1-\partial_x^2)u,
\end{align}
and $u$ can be expressed as
\begin{align*}
u = (1-\partial_x^2)^{-1}m.
\end{align*}
The operator $(1-\partial_x^2)^{-1}$ is defined through the convolution
\begin{align}\label{e:convolution}
(1-\partial_x^2)^{-1} f := g * f,\quad f\in\Lspace^2(\RR),
\end{align}
where the kernel function $g=g(x)=\rme^{-|x|}/2$ is the Green's function for the Helmholtz operator $1-\partial_x^2$ on the real line, i.e., $(1-\partial_x^2)g(x) = \delta(x)$ with Dirac delta distribution $\delta(x)$.
Making use of \cref{e:momentum}, the $b$-CH equation \cref{e:bCH} can be written in the form
\begin{equation}\label{e:bm}
m_t+um_x+bmu_x=0.
\end{equation}
As mentioned in \cref{s:intro}, we study the solitary wave solutions with nonzero constant background $\kappa$, i.e., $u(\cdot,x)\to\kappa$ as $x\to\pm\infty$. Hence, we consider the function spaces
\begin{subequations}\label{e:space}
\begin{align}
\Xk\ &:=\ \{m-\kappa\in H^1(\mathbb{R}): m(x)>0,\, x\in\mathbb{R}\},\label{e:mspace}\\
\Yk\ &:=\ \{u-\kappa\in \Hspace^3(\RR): \ (1-\pdif_x^2)u(x)>0, \ x\in\RR\}.\label{e:uspace}
\end{align}
\end{subequations}

\paragraph{Case $b\neq1$.}
We emphasize that the Hamiltonian structure and conserved quantities for the case $0<b<1$ are exactly the same as that for the case $b>1$ presented in \cite{LP2022}. In this case, for $m(t,\cdot)\in\Xk$ (i.e., $u(t,\cdot)\in\Yk$) \cref{e:bm} can be cast in the Hamiltonian form
\begin{equation*}
\frac{\pdif m}{\pdif t}=\calJ_{m,b}\frac{\delta \calE}{\delta m},
\end{equation*}
where the skew-symmetric operator $\calJ_{m,b}:=-(bm\partial_x+m_x)(1-\partial^2_x)^{-1}\partial^{-1}_x(bm\partial_x +(b-1)m_x)$. The conserved (i.e., time-invariant) mass integral $\calE(m):=\frac{1}{b-1}\int_{\mathbb{R}}(m-\kappa)\,\dif x$. There are two other conserved quantities for $m(t,\cdot)\in\calX_\kappa$ given by
\[
\calF_1(m)=\frac{1}{b-1}\int_{\R}\left(m^{\frac 1 b}-\kappa^{\frac 1 b}\right)\dif x,\quad \calF_2(m)=\frac{1}{b-1}\int_{\R}\left(\left(\frac{m_x^2}{b^2m^2}+1\right)m^{-\frac{1}{b}}-\kappa^{-\frac{1}{b}}\right)\dif x.
\]

\paragraph{Case $b=1$.}
In this case, the $b$-CH equation \cref{e:bCH} is of the form
\begin{equation}\label{e:CH}
u_t-u_{txx}+2uu_x=u_xu_{xx}+uu_{xxx}.
\end{equation}
Again, making use of \cref{e:momentum},
\cref{e:CH} can be expressed as
\begin{equation}\label{e:m}
m_t+ (um)_x=0.
\end{equation}
For $m(t,\cdot)\in\Xk$, \cref{e:m} can be written in the Hamiltonian form
\begin{equation}\label{e:CHham}
\frac{\partial m}{\partial t}=\Jm\frac{\delta\calH}{\delta m}(m),
\end{equation}
where the skew-symmetric operator $\Jm:=\calJ_{m,1}$ and takes the form
\begin{align}\label{e:J}
\Jm= -\partial_x\big(m(1-\partial_x^2)^{-1}\partial_x^{-1}(m\partial_x\cdot)\big),
\end{align}
and the conserved integral
\begin{equation}\label{e:H}
\calH(m)=\int_{\mathbb{R}}\Big(m(\ln m-\ln \kappa)-(m-\kappa)\Big)\dif x.
\end{equation}
In comparison with the Hamiltonian $\int_\R m\ln m\,\dif x$ for the case of zero background \cite{DHH2003}, the configuration of \cref{e:H} is more subtle. We note that this is a linear combination of the conserved integrals $\int_\R m\ln m\,\dif x$ and $\int_\R m\,\dif x$.
Indeed, \cref{e:m} implies that $\int_\RR m\,\dif x$ is conserved, and making use of \cref{e:m}, we have
\[
(m\ln m)_t=m_t(1+\ln m)=-(um)_x(1+\ln m)=-\partial_x\left(um\ln m+\frac{1}{2}u^2-\frac{1}{2}u_x^2\right).
\]
Alternatively, one may obtain the conserved quantity $\int_\R m\ln m\,\dif x$ from the limit of the difference $\calE(m)-\calF_1(m)$ for $\kappa=0$ as $b\to1$, i.e., $\lim_{b\to1}[\frac{1}{b-1}(m-m^{1/b})]= m\ln m$.
There are two other conserved quantities given by
\begin{align}\label{e:Qj}
\calQ_1(m) := \int_\RR (m - \kappa) \dif x,\quad \calQ_2(m) := \int_\RR \Big(m^{-3}(m^2+m_x^2) - \kappa^{-1}\Big)\dif x.
\end{align}
These arise from the translation symmetry of the traveling waves; moreover, the variational derivatives of these conserved quantities are detailed in \cref{s:Casimirs}.

\section{Existence of solitary waves}\label{s:existence}

In this section, we consider the existence of smooth solitary wave solutions to \cref{e:bCH} on nonzero background $\kappa$ for $b=1$ by analyzing its spatial dynamics. In addition, we also present the result of the case $0<b<1$.

\subsection{Case $b=1$}
We seek the solitary wave solutions of the form
\begin{align}\label{e:sw}
u(x,t)=\phi(\xi)
\end{align}
for wave shape $\phi$ and phase variable $\xi:=x-ct$, where $c$ is the wave speed and it is assumed to be positive without loss. Accordingly, the momentum density $m$ in the moving frame takes the form
\begin{align}\label{e:tw}
m(x,t) = \mu(\xi) := \phi(\xi) - \phi_{\xi\xi}(\xi).
\end{align}
We assume that the solitary wave $\phi(\xi)$ approaches the asymptote $\kappa$ as $\xi\to\pm\infty$, where $\kappa$ is assumed to be positive without loss, and thus the associated momentum density has the limit $\mu(\xi)\to \kappa$ as $\xi\to\pm\infty$. We remark that \cref{e:CH} does not possess smooth solitary waves with zero asymptote $\kappa$, see more discussion in \cref{r:ratio} below.

With the profiles $\phi$ and $\mu$ in the moving frame $\xi$, \cref{e:m} can be transformed into the traveling wave ODE
\begin{align}\label{e:ODE}
-c\mu_\xi + (\phi\mu)_\xi = 0.
\end{align}
Integrating \cref{e:ODE} with respect to $\xi$, yields
\begin{align}\label{e:integratedeqn}
(c-\phi)\mu = (c-\kappa)\kappa,
\end{align}
where the integration constant on the right-hand side stems from the aforementioned asymptotes of $\phi$ and $\mu$.
With the relation $\mu = \phi - \phi_{\xi\xi}$ and for $\phi\neq c$, we can write \cref{e:integratedeqn} as a second-order ODE
\begin{align}\label{e:ODE2}
\phi_{\xi\xi} - \phi + \frac{\kappa(c-\kappa)}{c-\phi} = 0.
\end{align}
Multiplying it by $\phi_\xi$ and integrating with respect to $\xi$ gives the total energy $E:\RR^2\to\RR$,
\begin{align}\label{e:energy}
E(\phi,\phi_\xi) := \frac 1 2 (\phi_\xi)^2 + V(\phi) = E_\hom,
\end{align}
where the potential energy $V(\phi):= -\phi^2/2 - \kappa(c-\kappa)\ln(c-\phi)$, and the integration constant $E_\hom$ corresponding to the solitary waves, which is given by
\begin{align}\label{e:Ehom}
E_\hom := E(\kappa,0) = V(\kappa) = -\frac 1 2 \kappa^2 - \kappa(c-\kappa)\ln(c-\kappa).
\end{align}
We note that the real system requires the conditions $\phi<c$ and $\kappa<c$ due to the logarithm in the potential energy.
Concerning the spatial dynamics, we rewrite the second-order ODE \cref{e:ODE2} as a first-order system, which can also be expressed in the Hamiltonian form, namely
\begin{align}\label{e:ODE1}
\frac{\dif}{\dif\xi} \begin{pmatrix} \phi\\ \phi_\xi \end{pmatrix}
= \begin{pmatrix}
\phi_\xi \\ \phi - \kappa(c-\kappa)/(c-\phi)
\end{pmatrix}
= \begin{pmatrix}
0 & 1 \\ -1 & 0
\end{pmatrix}
\nabla E(\phi,\phi_\xi),
\end{align}
where the Hamiltonian (first integral) $E$ is defined as in \cref{e:energy}.

We next give the existence of smooth solitary wave solutions to \cref{e:CH} with nonzero asymoptote $\kappa$ in the following.

\begin{theorem}[Existence of solitary waves for $b=1$]\label{t:existphi}
For fixed $0<\kappa<c/2$, there exists a smooth solitary wave solution to \cref{e:CH} of the form \cref{e:sw} with wave shape $\phi = \phi(\cdot;c,\kappa)\in\Cspace^\infty(\RR)$ and phase variable $\xi=x-ct$ that satisfying $\phi(\xi;c,\kappa)\to\kappa$ exponentially as $\xi\to\pm\infty$ and $\partial_\xi\phi(0;c,\kappa) = 0$ up to spatial translations. In addition, $\kappa<\phi(\xi;c,\kappa)<c$ for all $\xi\in\RR$.
\end{theorem}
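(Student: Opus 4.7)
The plan is to prove existence by phase-plane analysis of the planar Hamiltonian system \cref{e:ODE1}, exhibiting the solitary wave as a homoclinic orbit to a hyperbolic saddle equilibrium on the level set $\{E=E_{\hom}\}$. All structural ingredients (the first integral $E$, the potential $V$, and the value $E_{\hom}=V(\kappa)$) are already in place from \cref{e:energy,e:Ehom}, so the task is to verify the geometric picture of the level curves of $V$ on the strip $\phi<c$.

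First I would locate and classify the equilibria. Setting $V'(\phi)=-\phi+\kappa(c-\kappa)/(c-\phi)=0$ gives $\phi^2-c\phi+\kappa(c-\kappa)=0$, whose two roots in $\{\phi<c\}$ are $\phi=\kappa$ and $\phi=c-\kappa$; the hypothesis $0<\kappa<c/2$ ensures they are distinct and both lie in $(0,c)$. A direct computation gives
\[
V''(\kappa)=\frac{2\kappa-c}{c-\kappa}<0,\qquad V''(c-\kappa)=\frac{c-2\kappa}{\kappa}>0,
\]
so $(\kappa,0)$ is a hyperbolic saddle of \cref{e:ODE1} with real eigenvalues $\pm\sqrt{-V''(\kappa)}$, and $(c-\kappa,0)$ is a (linear) center. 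This already gives exponential decay of any trajectory on the stable/unstable manifolds of $(\kappa,0)$ as $\xi\to\pm\infty$.

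Next I would show that the level set $\{E=E_{\hom}\}$ contains a homoclinic loop based at $(\kappa,0)$ and enclosing $(c-\kappa,0)$. Because $V'$ vanishes on $\{\phi<c\}$ only at $\kappa$ and $c-\kappa$, $V$ is strictly monotone on each of the intervals $(-\infty,\kappa)$, $(\kappa,c-\kappa)$, $(c-\kappa,c)$. Using that $V(\phi)\to+\infty$ as $\phi\to c^-$ (from the $-\kappa(c-\kappa)\ln(c-\phi)$ term) and $V(c-\kappa)<V(\kappa)=E_{\hom}$, the intermediate value theorem produces a unique $\phi_{\max}\in(c-\kappa,c)$ with $V(\phi_{\max})=E_{\hom}$, and on $(-\infty,\kappa)$ the function $V$ stays strictly below $E_{\hom}$. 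Consequently the equation $\tfrac12\phi_\xi^2=E_{\hom}-V(\phi)$ has a nonempty, simply connected solution set in $(\phi,\phi_\xi)$-plane precisely for $\phi\in[\kappa,\phi_{\max}]$, yielding a closed loop through $(\kappa,0)$ traversed by the flow with $\phi_\xi=\pm\sqrt{2(E_{\hom}-V(\phi))}$. By standard arguments this loop is the homoclinic orbit of the saddle.

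Finally I would transcribe the dynamical information into the statement. Parameterizing the orbit by $\xi$ with $\phi_\xi(0)=0$, i.e.\ $\phi(0)=\phi_{\max}$, gives a trajectory $\phi=\phi(\xi;c,\kappa)$ with $\kappa<\phi(\xi)\leq\phi_{\max}<c$ for all $\xi\in\RR$ and with $\phi(\xi)\to\kappa$ as $\xi\to\pm\infty$; exponential convergence is immediate from hyperbolicity at the saddle. Smoothness $\phi\in C^\infty(\RR)$ follows from the real analyticity of the right-hand side of \cref{e:ODE1} on the strip $\{\phi<c\}$, together with the fact that the orbit stays in the interior of this strip. I expect the only delicate point to be the argument that closes the loop rather than letting the unstable manifold escape to $\phi=c$; this is handled by the monotonicity of $V$ on $(c-\kappa,c)$ and the blow-up $V\to+\infty$, which together force the unstable branch to return to the saddle in finite ``potential energy.'' Everything else is bookkeeping.
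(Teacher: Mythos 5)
Your proposal is correct and follows essentially the same route as the paper: phase-plane analysis of \cref{e:ODE1}, identifying the saddle at $(\kappa,0)$ and the center at $(c-\kappa,0)$ from the shape of $V$, and realizing the solitary wave as the homoclinic loop on the level set $E=E_\hom$ with exponential decay coming from hyperbolicity of the saddle; you simply supply more detail than the paper does (the explicit values of $V''$ and the intermediate-value construction of $\phi_{\max}$, which the paper only records later as $G_{c,\kappa}$ in \cref{r:rangephi}). One cosmetic slip: the level set $\{E=E_\hom\}$ is not contained in $\phi\in[\kappa,\phi_{\max}]$ (it has unbounded branches in $\phi<\kappa$ precisely because $V<E_\hom$ there), but the bounded component you construct is indeed the homoclinic orbit, so the argument stands.
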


\begin{proof}
For any fixed $(c,\kappa)$ satisfying $0<\kappa<c/2$, the graph of the potential energy $V(\phi)$ has a local maximum at $\phi=\kappa$ and a local minimum at $\phi=c-\kappa$, and $V(\phi)$ diverges as $\phi$ approaches $c$.
These imply, in $(\phi,\phi_\xi)$-plane, that the system \cref{e:ODE1} has a saddle point at $(\kappa,0)$, a center at $(c-\kappa,0)$, and a singular line at $\phi=c$. Since the energy $E$ from \cref{e:energy} is conserved along orbits, the unstable manifold at $(\kappa,0)$ must intersect the stable manifold, yielding a homoclinic orbit, on which the total energy has the value $E_\hom$, cf.,~\cref{f:HOMc2k0p4}.
As mentioned above, for $E$ to be real-valued we require the condition $\phi<c$, and thus the homoclinic orbit does not cross the singular line from left to right.
Hence, the traveling wave solution associated with the homoclinic orbit with energy $E_\hom$ forms the solitary wave solution $\phi=\phi(\xi;c,\kappa)$, which decays exponentially to $\kappa$ as $\xi\to\pm\infty$ due to the saddleness, and has the range $\kappa<\phi(\xi;c,\kappa)< c$ for all $\xi\in\RR$. The intersection of the homoclinic orbit and the horizontal axis corresponds to $\partial_\xi\phi(0;c,\kappa) = 0$ up to spatial translations.
\end{proof}

\begin{figure}[t!]
\centering
\subfigure[]{\includegraphics[trim = 5.5cm 8.5cm 6cm 9cm, clip, height=4.5cm]{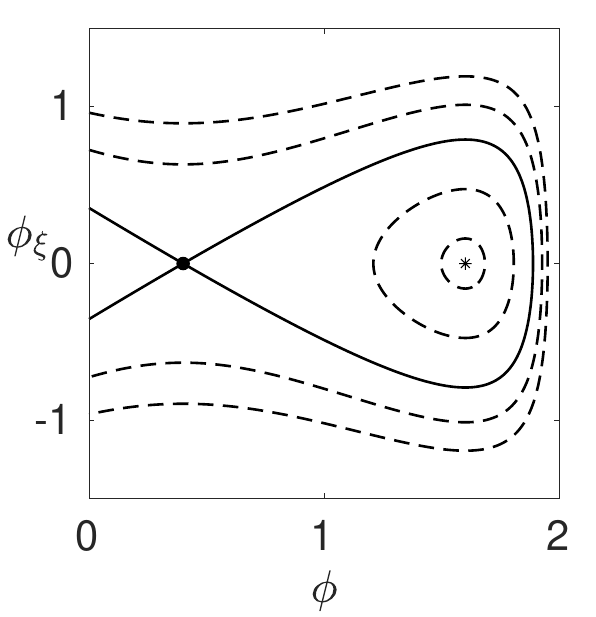}\label{f:HOMc2k0p4}}
\hfil
\subfigure[]{\includegraphics[trim = 5.5cm 8.5cm 6cm 9cm, clip, height=4.5cm]{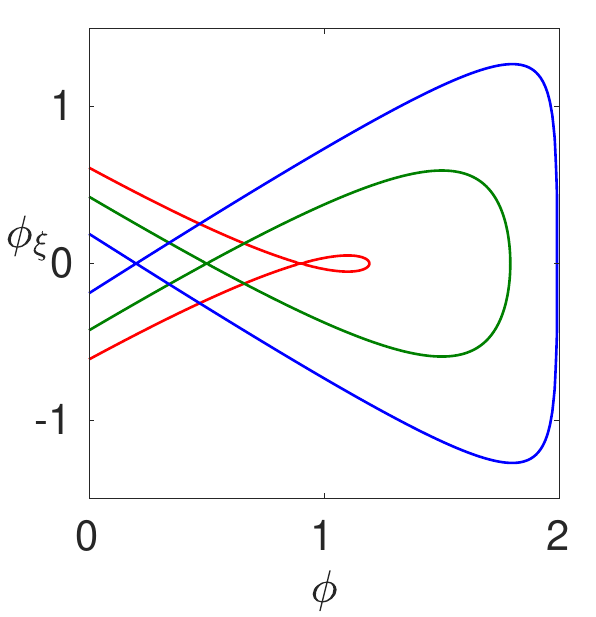}\label{f:HOMc2}}
\hfil
\subfigure[]{\includegraphics[trim = 5.5cm 8.5cm 6cm 9cm, clip, height=4.5cm]{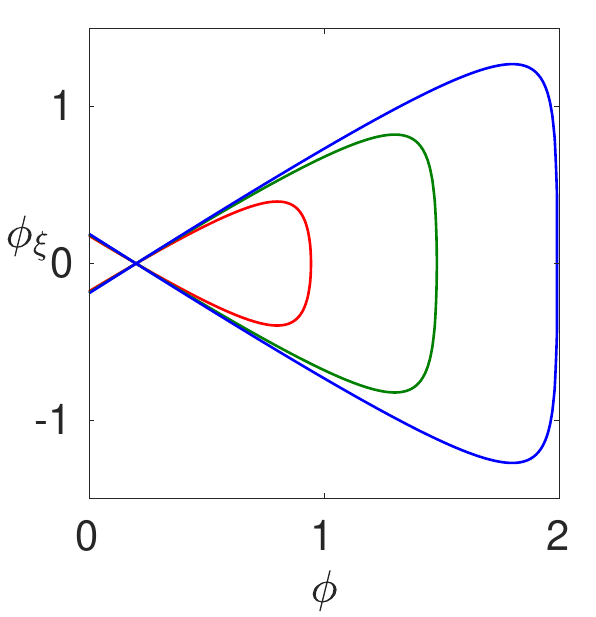}\label{f:HOMk0p2}}
\caption{Samples of phase portraits for the ODE system \cref{e:ODE1}. (a) The homoclinic orbit (solid)
approaches the saddle point (dot) at $(0.4,0)$, with the center (asterisk) at $(1.6,0)$, and the portion of it that lies in $\phi>0.4$ corresponds to the traveling solitary wave solution of \cref{e:CH}; the dashed curves inside the homoclinic orbit are periodic orbits that correspond to spatially periodic traveling wave solutions of \cref{e:CH};
the parameters are $c=2$ and $\kappa=0.4$. Only homoclinic orbits are plotted in (b) for fixed $c=2$, $\kappa=0.9$ (red), $\kappa=0.5$ (green) and $\kappa=0.2$ (blue); and in (c) for fixed $\kappa=0.2$, $c=1$ (red), $c=1.5$ (green) and $c=2$ (blue).}
\label{f:homoclinic}
\end{figure}

\begin{remark}[]\label{r:ratio}
For $\kappa=0$ or $\kappa=c$ the system \cref{e:ODE1} has only a saddle point; for $\kappa = c/2$ \cref{e:ODE1} has a degenerate equilibrium;
for $\kappa>c$ \cref{e:ODE1} has two saddle points. All these cases do not allow the existence of homoclinic orbits, thus no solitary waves. For $c/2 <\kappa<c$ the solitary waves exist, and the proof is analogous to that of \cref{t:existphi}, but $\phi(\xi)$ decays to $c-\kappa$ as $\xi\to\pm\infty$, so we omit the discussion of this case.
\end{remark}

Differentiating \cref{e:integratedeqn} with respect to $\xi$ gives the expressions of $\mu$
and its derivatives in terms of $\phi$ as follows
\begin{align}\label{e:murelation}
\mu = \frac{\kappa(c-\kappa)}{c-\phi},\quad \mu_\xi = \frac{\kappa(c-\kappa)\phi_\xi}{(c-\phi)^2},\quad \mu_{\xi\xi} = \kappa(c-\kappa)\left(\frac{2\phi_\xi^2}{(c-\phi)^3} + \frac{\phi_{\xi\xi}}{(c-\phi)^2}\right).
\end{align}
Combining the above relations and \cref{t:existphi}, we may obtain the following existence result of traveling wave solutions in terms of $\mu$. This is required since we will consider the orbital stability of the traveling wave solutions $\mu$ instead of the solitary waves $\phi$ in \cref{s:stability}.

\begin{corollary}\label{c:existmu}
For fixed $0<\kappa<c/2$, there exists a traveling wave solution to \cref{e:CHham} of the form \cref{e:tw} with wave shape $\mu = \mu(\cdot;c,\kappa)\in\Cspace^\infty(\RR)$ and phase variable $\xi=x-ct$ that satisfying $\mu(\xi;c,\kappa)\to\kappa$ exponentially as $\xi\to\pm\infty$ and $\partial_\xi\mu(0;c,\kappa) = 0$ up to spatial translations. In addition, $\mu = (1-\partial_\xi^2)\phi$ with $\phi$ obtained in \cref{t:existphi}, and $\mu(\xi;c,\kappa)>\kappa$ for all $\xi\in\RR$.
\end{corollary}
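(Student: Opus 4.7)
The plan is to construct $\mu$ directly from the $\phi$ provided by \cref{t:existphi} using the algebraic relation \cref{e:integratedeqn}, then to transfer the qualitative properties of $\phi$ (smoothness, exponential decay, critical point at the origin, pointwise bounds) to $\mu$. Concretely, I would define
\[
\mu(\xi;c,\kappa) := \frac{\kappa(c-\kappa)}{c-\phi(\xi;c,\kappa)},
\]
which is well-defined and smooth because $\phi \in C^\infty(\RR)$ with $\phi < c$ uniformly (the traveling-wave profile cannot cross the singular line, and $\phi \to \kappa$ at infinity with $\kappa < c$, so $c-\phi$ is bounded below by a positive constant). Then the relation $\mu = (1-\partial_\xi^2)\phi$ is not a separate statement but a rewriting of \cref{e:ODE2}: the second-order ODE satisfied by $\phi$ is exactly $\phi_{\xi\xi} - \phi + \kappa(c-\kappa)/(c-\phi)=0$, i.e.\ $\phi - \phi_{\xi\xi} = \mu$, which matches \cref{e:tw}.

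Next I would verify the three asymptotic/qualitative properties. For the exponential decay, I would write
\[
\mu(\xi;c,\kappa) - \kappa \;=\; \frac{\kappa(c-\kappa) - \kappa(c-\phi)}{c-\phi} \;=\; \frac{\kappa\bigl(\phi(\xi;c,\kappa)-\kappa\bigr)}{c-\phi(\xi;c,\kappa)},
\]
so that the exponential decay of $\phi-\kappa$ from \cref{t:existphi} and the positive lower bound on $c-\phi$ yield the exponential decay of $\mu-\kappa$ at the same rate. The critical point at $\xi=0$ is immediate from the middle identity in \cref{e:murelation}: $\mu_\xi(0;c,\kappa) = \kappa(c-\kappa)\phi_\xi(0;c,\kappa)/(c-\phi(0;c,\kappa))^2 = 0$ since $\phi_\xi(0;c,\kappa)=0$. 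Finally, for $\xi\in\RR$ the strict inequality $\phi(\xi;c,\kappa) > \kappa$ gives $c-\phi(\xi;c,\kappa) < c-\kappa$, hence
\[
\mu(\xi;c,\kappa) \;=\; \frac{\kappa(c-\kappa)}{c-\phi(\xi;c,\kappa)} \;>\; \frac{\kappa(c-\kappa)}{c-\kappa} \;=\; \kappa,
\]
which is the required pointwise bound (and incidentally confirms $\mu > 0$, so the constructed profile lies in the admissible set $\Xk$ from \cref{e:mspace}).

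There is really no significant obstacle here: the corollary is essentially a change of variables, and the only thing to be mildly careful about is ensuring the definition of $\mu$ through $\kappa(c-\kappa)/(c-\phi)$ is globally smooth, which reduces to the uniform bound $\phi < c$ already established in \cref{t:existphi}. The final sentence of the corollary, that $\mu$ solves \cref{e:CHham} in the traveling frame, follows because \cref{e:CHham} and \cref{e:m} are equivalent under the identification $m=(1-\partial_x^2)u$, and the traveling-wave ansatz \cref{e:sw}--\cref{e:tw} reduces \cref{e:m} to \cref{e:ODE}, which is solved by $(\phi,\mu)$ by construction.
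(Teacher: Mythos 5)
Your proposal is correct and follows essentially the same route as the paper: the paper derives the corollary by ``combining'' the relations in \cref{e:murelation} (in particular $\mu=\kappa(c-\kappa)/(c-\phi)$) with the properties of $\phi$ from \cref{t:existphi}, which is precisely your construction. Your write-up simply makes explicit the details the paper leaves implicit (the uniform lower bound $c-\phi\geq c-G_{c,\kappa}>0$ guaranteeing smoothness, and the identity $\mu-\kappa=\kappa(\phi-\kappa)/(c-\phi)$ transferring the exponential decay), all of which are sound.
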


As a continuation of existence results and in preparation for the stability analysis presented in \cref{s:stability}, we require the refinement of the range of traveling wave solutions and the parities of them and their derivatives with respect to the spatial variable.
\begin{remark}[Range of $\phi$]\label{r:rangephi}
Since the energy is conserved along the homoclinic orbit, and the existence of the center at $(c-\kappa,0)$ in the phase plane, we deduce that there exists a $(c,\kappa)$-dependent value $G_{c,\kappa}$ that satisfies $G_{c,\kappa}\in(c-\kappa,c)$ and $E(G_{c,\kappa},0)=E_\hom$, such that the solitary wave $\phi(\xi;c,\kappa)$ satisfies
\begin{align*}
\kappa<\phi(\xi;c,\kappa)\leq G_{c,\kappa} < c,\quad \xi\in\RR,
\end{align*}
i.e., $G_{c,\kappa} = \max_{\xi\in\R}(\phi(\xi;c,\kappa))$ for each fixed $(c,\kappa)$.

Analyzing the monotonicity of $G_{c,\kappa}$, cf.,~\cref{s:monotone}, we find that for fixed $c$ the value of $G_{c,\kappa}$ increases to $c$ as $\kappa$ decreases to $0$; for fixed $\kappa$ the value of $G_{c,\kappa}$ increases in $c$.
We plot examples in \cref{f:HOMc2,f:HOMk0p2} that illustrate these.
\end{remark}

\begin{remark}[Range of $\mu$]\label{r:rangemu}
We recall the relations in \cref{e:murelation}, and denote $\mu = \mu(\phi)$ with abuse of notation.
Since $\mu(\kappa) = \kappa$ and $\mu(\phi)$ monotonically increases in $\phi$ and diverges as $\phi\to c$, it satisfies
\begin{align*}
\kappa<\mu(\xi;c,\kappa)\leq M_{c,\kappa} <\infty,\quad \xi\in\RR,
\end{align*}
where $M_{c,\kappa}:=\kappa(c-\kappa)/(c-G_{c,\kappa})$. Rearranging $E(G_{c,\kappa},0) = E_\hom$ as mentioned in \cref{r:rangemu}, yields
\begin{align*}
\frac{c-\kappa}{c-G_{c,\kappa}} = \exp\left(\frac{G_{c,\kappa}^2-\kappa^2}{2\kappa(c-\kappa)}\right)\quad \Rightarrow \quad M_{c,\kappa} = \kappa\exp\left(\frac{G_{c,\kappa}^2-\kappa^2}{2\kappa(c-\kappa)}\right).
\end{align*} Hence, for fixed $c$ the value of $M_{c,\kappa}$ grows exponentially as $\kappa$ decreases to $0$; for fixed $\kappa$ the value of $M_{c,\kappa}$ grows exponentially in $c$ since $(c-\kappa)^2<G_{c,\kappa}^2<c^2$.
Moreover, $\mu(\xi;c,\kappa)$ exponentially decays to $\kappa$ as $\xi\to\pm\infty$.
\end{remark}

\begin{remark}[Parity]\label{r:parity}
Due to the relations in \cref{e:murelation}, the even function $\phi$ implies that $\mu$ and $\mu_{\xi\xi}$ are even functions, and $\mu_\xi$ is odd with mean zero, i.e., $\int_\R \mu_\xi\,\dif\xi = 0$.
\end{remark}

\subsection{Case $0<b<1$}\label{s:existbn1}
We briefly discuss the existence of the smooth solitary waves to \cref{e:bCH} in case of $0<b<1$. We note that its proof is analogous to that of $b>1$, which has been proved in \cite{LP2022}. Hence, we just present the result without any further proofs.

\begin{lemma}[Existence of solitary waves for $0<b<1$, cf.~Theorem 1 in \cite{LP2022}]\label{l:existbn1}
For fixed $0<b<1$ and $0<\kappa<\frac{c}{b+1}$, there exists a smooth solitary wave solution to \cref{e:bCH} of the form \cref{e:sw} with wave shape $\phi = \phi_b(\cdot;c,\kappa)\in\Cspace^\infty(\RR)$ and phase variable $\xi=x-ct$ that satisfying $\phi_b(\xi;c,\kappa)\to\kappa$ exponentially as $\xi\to\pm\infty$ and $\partial_\xi\phi_b(0;c,\kappa) = 0$ up to spatial translations. In addition, $\kappa<\phi_b(\xi;c,\kappa)<c$ for all $\xi\in\RR$.
\end{lemma}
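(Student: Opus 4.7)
The strategy is to mirror the spatial-dynamics analysis used for $b=1$ in the proof of \cref{t:existphi}, with only routine modifications due to the general exponent $b\in(0,1)$. Inserting the traveling-wave ansatz $u(x,t)=\phi_b(\xi)$, $m(x,t)=\mu_b(\xi)$ with $\xi=x-ct$ into the conservation form $m_t+um_x+bmu_x=0$ yields $(\phi_b-c)\mu_{b,\xi}+b\mu_b\phi_{b,\xi}=0$, which is separable; integrating and imposing the far-field condition $\phi_b,\mu_b\to\kappa$ gives the algebraic first integral
\[
\mu_b(c-\phi_b)^b=\kappa(c-\kappa)^b.
\]
Combining this with $\mu_b=\phi_b-\partial_\xi^2\phi_b$ produces the planar-Hamiltonian second-order ODE
\[
\partial_\xi^2\phi_b=\phi_b-\frac{\kappa(c-\kappa)^b}{(c-\phi_b)^b},
\]
with conserved energy $E_b(\phi,\phi_\xi)=\tfrac12\phi_\xi^2+V_b(\phi)$ and potential
\[
V_b(\phi)=-\frac{\phi^2}{2}-\frac{\kappa(c-\kappa)^b}{1-b}(c-\phi)^{1-b},
\]
valid on the strip $\{\phi<c\}$.

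Next comes the phase-portrait analysis. Writing $f(\phi):=\phi(c-\phi)^b$, equilibria on the $\phi$-axis are characterised by $f(\phi)=f(\kappa)$; since $f(0)=f(c)=0$ and $f$ has a unique interior maximum at $\phi=c/(b+1)$, the hypothesis $0<\kappa<c/(b+1)$ produces exactly one additional equilibrium $\phi^*\in(c/(b+1),c)$. A direct computation gives
\[
V_b''(\kappa)=\frac{(b+1)\kappa-c}{c-\kappa}<0,\qquad V_b''(\phi^*)=\frac{(b+1)\phi^*-c}{c-\phi^*}>0,
\]
so $(\kappa,0)$ is a hyperbolic saddle with eigenvalues $\pm\sqrt{-V_b''(\kappa)}$ and $(\phi^*,0)$ is a nonlinear center. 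Conservation of $E_b$ then glues the saddle's stable and unstable manifolds into a symmetric homoclinic loop encircling $(\phi^*,0)$; the associated wave shape $\phi_b(\cdot;c,\kappa)$ is smooth on $\RR$ by smoothness of the vector field on $\{\phi<c\}$, decays to $\kappa$ exponentially at the saddle rate $\sqrt{-V_b''(\kappa)}$ as $\xi\to\pm\infty$, takes values in $(\kappa,c)$, and can be translated so that $\partial_\xi\phi_b(0;c,\kappa)=0$.

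\textbf{Main obstacle.} The one genuinely new feature compared with the proof for $b>1$ in \cite{LP2022} is that, when $0<b<1$, the exponent $1-b$ is positive, so $V_b(c)=-c^2/2$ is finite and the singular line $\phi=c$ is a priori accessible; by contrast, for $b>1$ the divergence $V_b(\phi)\to+\infty$ as $\phi\to c^-$ automatically confines the orbit. Consequently, I expect the delicate step to be verifying that the level curve $\{E_b=V_b(\kappa)\}$ reconnects to itself inside $\{\phi<c\}$, equivalently that $V_b$ attains the value $V_b(\kappa)$ a second time at some $\phi_{**}\in(\phi^*,c)$. Since $V_b$ is strictly increasing on $(\phi^*,c)$ (because $\phi^*$ is the only zero of $V_b'$ there and $V_b''(\phi^*)>0$), this reduces to the scalar inequality $V_b(c)>V_b(\kappa)$, an algebraic constraint on $(\kappa,b,c)$ to be verified by direct estimation. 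All remaining steps are a transcription of the $b=1$ argument of \cref{t:existphi}, with $-\ln(c-\phi)$ replaced throughout by $-(c-\phi)^{1-b}/(1-b)$.
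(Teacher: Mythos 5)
Your reduction to the planar Hamiltonian system is correct and is exactly the route the paper has in mind: the paper gives no proof of \cref{l:existbn1} at all, merely asserting that the argument is ``analogous'' to the case $b>1$ of \cite{LP2022} and recording the first integral \cref{e:hombneq1} (which agrees with your $E_b$ after division by $b-1$). Your formulas for the algebraic first integral $\mu_b(c-\phi_b)^b=\kappa(c-\kappa)^b$, the potential $V_b$, the equilibria and the signs of $V_b''(\kappa)$ and $V_b''(\phi^*)$ all check out, and you have correctly isolated the one place where the $b>1$ argument does not transfer: for $0<b<1$ the potential stays bounded as $\phi\to c^-$, so confinement of the unstable manifold to $\{\phi<c\}$ must be proved rather than read off from a divergence.

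The gap is that the scalar inequality you defer, $V_b(c)>V_b(\kappa)$, is \emph{false} on part of the stated parameter range. Explicitly,
\[
V_b(c)-V_b(\kappa)=-\frac{c^2}{2}+\frac{\kappa^2}{2}+\frac{\kappa(c-\kappa)}{1-b}
=(c-\kappa)\left(\frac{\kappa}{1-b}-\frac{c+\kappa}{2}\right),
\]
which is positive if and only if $\kappa>\frac{1-b}{1+b}\,c$. Since $\frac{1-b}{1+b}c<\frac{c}{1+b}$ for every $b\in(0,1)$, your criterion holds only for $\kappa\in\bigl(\frac{1-b}{1+b}c,\frac{c}{1+b}\bigr)$ and fails on the nonempty subinterval $\bigl(0,\frac{1-b}{1+b}c\bigr]$; e.g.\ $b=\tfrac12$, $c=3$, $\kappa=\tfrac12$ satisfies $0<\kappa<c/(b+1)=2$ yet $V_b(c)=-4.5<V_b(\kappa)=-2.625$. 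In that regime energy conservation gives $\tfrac12\phi_\xi^2=V_b(\kappa)-V_b(\phi)>0$ for all $\phi\in(\kappa,c]$, so the unstable manifold reaches the singular line $\phi=c$ in finite $\xi$ with nonzero slope and cannot be continued as a smooth solution with $\mu_b>0$; no homoclinic loop forms. The obstruction is an exact dichotomy, so it cannot be repaired by a sharper confinement estimate: your plan, once the deferred verification is actually carried out, proves existence only for $\kappa\in\bigl(\frac{1-b}{1+b}c,\frac{c}{1+b}\bigr)$ and disproves it for smaller $\kappa$. You should therefore either restrict the hypothesis accordingly or flag that the statement as given (and the paper's bare appeal to the $b>1$ proof, where the divergence of the potential at $\phi=c$ makes the issue vacuous) does not cover the full range $0<\kappa<\frac{c}{b+1}$. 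Note the consistency check: the threshold $\frac{1-b}{1+b}c$ tends to $0$ as $b\to1^-$, recovering the unrestricted range of \cref{t:existphi}.
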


Note that the solitary wave solutions and the admissible range of $\kappa$ depend on the parameter $b$. We recall, in contrast to \cref{e:energy} for $b=1$, that the total energy for \cref{e:bCH} with $b>1$ has been given in \cite{LP2022}, which takes the form
\begin{align}\label{e:hombneq1}
E(\phi,\phi_\xi) = \frac 1 2 (b-1)(\phi_\xi^2 - \phi^2) + \frac{\kappa(c-\kappa)^b}{(c-\phi)^{b-1}} = c\kappa - \frac 1 2 (b+1)\kappa^2.
\end{align}
We find that this is also the total energy for the case $0<b<1$, and the homoclinic orbit associated with the smooth solitary waves to \cref{e:bCH} can be generated by \cref{e:hombneq1} and is similar to that in \cref{f:HOMc2k0p4}. We illustrate this for different values of $b$ in \cref{f:homovaryb}.

\begin{figure}[t!]
\centering
\includegraphics[trim = 5.5cm 8.5cm 6cm 9cm, clip, height=4.5cm]{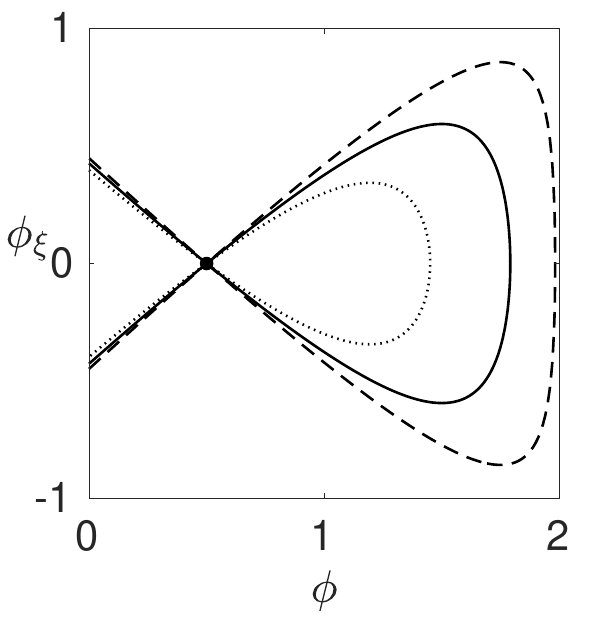}
\caption{Samples of homoclinic orbits for the ODE system \cref{e:ODE1}, which associate with the smooth solitary waves to \cref{e:bCH} with $b=0.7$ (dashed), $b=1$ (solid) and $b=1.4$ (dotted), and fixed parameters $\kappa=0.5$ and $c=2$.}
\label{f:homovaryb}
\end{figure}

\tikzset{
   flow/.style =
   {decoration = {markings, mark=at position #1 with {\arrow{>}}},
    postaction = {decorate}
   }}

\tikzset{global scale/.style={
    scale=#1,
    every node/.append style={scale=#1}
  }
}

\section{Stability of solitary waves}\label{s:stability}
In this section, we consider the orbital stability of the smooth solitary waves to \cref{e:bCH} for the case of $b=1$ only, i.e., \cref{e:CH}.
Our proof is essentially based upon a general framework, motivated by the approach of \cite{GSS1987}, which characterizes the critical points of Hamiltonian systems with symmetries and conserved quantities via the analysis of a constrained operator. We also refer the readers to, e.g.,~\cite{KP2015}, for a detailed discussion on KdV equations and extension to the general framework.

\begin{remark}[Case $0<b<1$]\label{r:stability}
As we mentioned in \cref{s:Ham}, the $b$-CH equation \cref{e:bCH} for both $0<b<1$ and $b>1$ share the same Hamiltonian form and conserved quantities. This leads to the same Lagrangian functional (up to a minus sign) and stability criterion as in \cite{LP2022}. Hence, the proof of the stability of the solitary waves in \cref{e:bCH} for $0<b<1$ follows the papers \cite{LP2022,LL2023}, and we do not pursue this further in the present paper.
\end{remark}

Returning to the case $b=1$, we recall that \cref{e:m} can be written in the Hamiltonian form \cref{e:CHham} in terms of $m$, namely
\begin{align*}
\partial_t m = \Jm \frac{\delta\calH}{\delta m}(m),
\end{align*}
where the operator $\Jm$ defined in \cref{e:J} maps
\begin{align*}
\Jm:= -\partial_x\big(m(1-\partial_x^2)^{-1}\partial_x^{-1}(m\partial_x\cdot)\big): \Yspace\to \Xspace,
\end{align*}
and we take
\begin{align}\label{e:fctspace}
\Yspace = \Hspace^1(\RR)\subset\Lspace^2(\RR) = \Xspace.
\end{align}
Such selection of function spaces is appropriate and we demonstrate this in \cref{s:opJ}. This is a skew-symmetric operator with respect to the $\Lspace^2$-inner product $\langle u,v \rangle_{\Lspace^2} = \int_\RR u\bar v\,\dif x$, i.e., $\langle \Jm u, v\rangle_{\Lspace^2} = -\langle u,\Jm v \rangle_{\Lspace^2}$ for all $u,v\in\Lspace^2(\RR)$.
The function $\delta\calH/\delta m$ denotes the variational derivative of the functional $\calH$ given in \cref{e:H}, and $(\delta\calH/\delta m)(m) = \ln m - \ln\kappa$.
When there is no ambiguity, in the remainder of this section we denote $\langle \cdot,\cdot \rangle_{\Lspace^2}$ as $\langle \cdot,\cdot \rangle$.

In the moving frame $\xi:=x-ct$, we seek the solution of the form $m(x,t) = m(\xi,t)$, with abuse of notations, that satisfies \cref{e:CHham}, namely
\begin{align}\label{e:mJc}
\partial_t m = \Jm\frac{\delta\calH}{\delta m}(m) + c\partial_\xi m = \Jm\left(\frac{\delta\calH}{\delta m}(m) + c\frac{\delta \calQ}{\delta m}(m)\right),
\end{align}
where the form of $\Jm$ gives a second conserved quantity: the charge $\calQ$, which is defined via
\begin{align}\label{e:JdQ}
\Jm\frac{\delta\calQ}{\delta m}(m) = \partial_\xi m.
\end{align}
We remark that the operator $\Jm$ has no kernel on $\Lspace^2(\RR)$, and we give a proof in \cref{s:nokernel}. Hence, we may solve the variational derivative of $\calQ$ from \cref{e:JdQ} by inverting $\Jm$, i.e., formally $(\delta\calQ/\delta m)(m) = \Jm^{-1}\partial_\xi m$, and this gives the charge of the form
\begin{align}\label{e:charge}
\calQ(m) =-\frac 1 2\kappa^{-1} \calQ_1(m) - \frac 1 2 \kappa \calQ_2(m),
\end{align}
where $\calQ_1$ and $\calQ_2$ are given in \cref{e:Qj}. The choice of the prefactors in \cref{e:charge} arises from the selection of the function spaces \cref{e:fctspace}, see more details of the computation in \cref{s:Casimirs}.
In addition, we note that $\calQ(m)$ is time-invariant since $\calH(m(\xi))$ vanishes at $\xi=\pm\infty$. Indeed,
\begin{align*}
\partial_t\calQ(m) &= \left\langle \partial_t m, \frac{\delta\calQ}{\delta m}(m) \right\rangle = \left\langle \Jm \frac{\delta\calH}{\delta m}(m), \frac{\delta\calQ}{\delta m}(m) \right\rangle\\
& = - \left\langle \frac{\delta\calH}{\delta m}(m), \Jm\frac{\delta\calQ}{\delta m}(m) \right\rangle = - \left\langle \frac{\delta\calH}{\delta m}(m), \partial_\xi m \right\rangle = - \int_\RR \partial_\xi \calH(m)\,\dif\xi = 0.
\end{align*}
Motivated by \cref{e:mJc}, we define the Lagrangian functional for \cref{e:CHham}
\begin{align}\label{e:Lagrangian}
\LG(m) = -\calH(m) - c\calQ(m)  = -\calH(m) + \frac 1 2 c\kappa^{-1} \calQ_1(m) + \frac 1 2 c\kappa \calQ_2(m).
\end{align}

\begin{remark}
In comparison with the Lagrangian for the case $b\neq1$ in \cite[Eq.~(32)]{LP2022}, neither the conserved functionals nor the Lagrangian multipliers in \cref{e:Lagrangian} are the limits of those in \cite[Eq.~(32)]{LP2022} as $b\to1$; in particular, the multipliers in \cite[Eq.~(32)]{LP2022} are  $(c,\kappa)$-independent constants as $b\to1$. In other words, \cref{e:Lagrangian} is degenerate and is not the continuation of \cite[Eq.~(32)]{LP2022} as $b\to 1$.
\end{remark}

Hence, in the moving frame, \cref{e:mJc} can be written as the form
\begin{align}\label{e:CHtravel}
\partial_t m = - \Jm \frac{\delta\LG}{\delta m}(m),
\end{align}
where the variational derivative of $\LG$ takes the form
\begin{align*}
\frac{\delta\LG}{\delta m}(m) = \ln\kappa - \ln m + \frac 1 2 c \kappa^{-1} + \frac 1 2 c \kappa (-m^{-2} + 3m^{-4}m_\xi^2 - 2m^{-3}m_{\xi\xi}).
\end{align*}
We consider the traveling wave solutions of the form $m(\xi,t) = \mu(\xi)$, which is stationary in the moving frame. Substitution into \cref{e:CHtravel} gives the traveling equilibrium equation
\begin{align}\label{e:steadyLambda}
\calJ_\mu \frac{\delta\LG}{\delta m}(\mu) = 0 \quad\Rightarrow\quad \frac{\delta\LG}{\delta m}(\mu) = 0,
\end{align}
where the latter arises from the empty kernel of $\calJ_\mu$ on $\Lspace^2(\RR)$.
This means the equilibrium solution $\mu$ is the critical point of $\LG$.
Differentiating \cref{e:steadyLambda} with respect to $\xi$, yields
\begin{align}\label{e:kernel}
\frac{\delta^2\LG}{\delta m^2}(\mu)(\partial_\xi\mu) = 0.
\end{align}
We denote $\calL$ the second variational derivative of $\LG$, which takes the form
\begin{align}\label{e:L}
\calL:=\frac{\delta^2\LG}{\delta m^2}(\mu) = - c\kappa\left(\partial_\xi(\mu^{-3}\partial_\xi\cdot) - \mu^{-3} + 6\mu^{-5}\mu_\xi^2 - 3\mu^{-4}\mu_{\xi\xi} + \frac{1}{c\kappa \mu}\right).
\end{align}

\begin{lemma}\label{l:opL}
The operator $\calL:\Hspace^2(\RR)\to\Lspace^2(\RR)$ defined in \cref{e:L} is a self-adjoint operator with respect to the $L^2$-inner product, and its spectrum is composed of
\begin{itemize}
\item [(a)] a simple zero eigenvalue, with the associated eigenfunction $\partial_\xi\mu$, i.e., $\partial_\xi\mu\in\ker(\calL)$;
\item [(b)] a simple negative eigenvalue $\lambda_0$, with the associated eigenfunction $\psi_0$ that has no zeros;
\item [(c)] a finite number of simple positive eigenvalues $\lambda_j$, $j=2,\dots,N$ that lie in the interval $(0,(c-\kappa)/\kappa^2)$;
\item [(d)] the essential spectrum $\sigma_\ess(\calL) = \{\lambda\in\CC: \lambda= (ck^2 + c-\kappa)/\kappa^2,\,k\in\RR\}$, which is strictly positive.
\end{itemize}
\end{lemma}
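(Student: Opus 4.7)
The plan is to treat $\calL$ as a Sturm--Liouville operator on $\RR$ and to combine three classical ingredients: Weyl's theorem for the essential spectrum, Sturm oscillation theory for the discrete spectrum, and the parity/monotonicity information of $\mu$ collected in \cref{r:rangemu,r:parity}. First, rewriting $\calL$ in standard form $\calL\psi = -\partial_\xi(p\,\partial_\xi\psi) + q\psi$, with $p(\xi) = c\kappa\,\mu^{-3}(\xi)$ and a bounded smooth potential $q(\xi)$ read off from \cref{e:L}, self-adjointness on $\Hspace^2(\RR)\subset\Lspace^2(\RR)$ follows from standard Sturm--Liouville theory since $\mu$ is smooth and bounded strictly between $\kappa$ and $M_{c,\kappa}$ by \cref{r:rangemu}.

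For (d) I would use that $\mu\to\kappa$ and $\mu_\xi,\mu_{\xi\xi}\to 0$ exponentially as $|\xi|\to\infty$ (\cref{r:rangemu}), so the coefficients of $\calL$ converge exponentially fast to constants, and the limiting operator is $\calL_\infty = \kappa^{-2}(-c\,\partial_\xi^2 + c - \kappa)$, whose spectrum is computed by Fourier transform to be $\{(ck^2+c-\kappa)/\kappa^2:k\in\RR\}$, strictly positive since $0<\kappa<c/2<c$. The perturbation $\calL-\calL_\infty$ has coefficients with exponential decay, hence is relatively compact with respect to $\calL_\infty$, and Weyl's theorem yields $\sigma_\ess(\calL) = \sigma_\ess(\calL_\infty)$ as claimed.

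For (a) and (b) I would use \cref{e:kernel} to get $\partial_\xi\mu\in\ker(\calL)$, and then appeal to Sturm oscillation. By \cref{r:parity}, $\partial_\xi\mu$ is odd, and since $\mu$ attains its unique maximum at $\xi=0$ (with $\mu'$ negative for $\xi>0$ and positive for $\xi<0$), $\partial_\xi\mu$ has exactly one zero on $\RR$. In Sturm oscillation for one-dimensional self-adjoint operators below the essential spectrum, the eigenvalues can be ordered $\lambda_0<\lambda_1<\cdots$ with the $n$-th eigenfunction possessing exactly $n$ nodes; hence the eigenvalue $0$ is $\lambda_1$ and is simple, completing (a), and there is exactly one negative eigenvalue $\lambda_0<0$ whose eigenfunction $\psi_0$ has no zeros, giving (b). Simplicity of any discrete eigenvalue is automatic in one spatial dimension because two linearly independent $\Lspace^2$-eigenfunctions would yield a nonzero constant Wronskian, contradicting decay at infinity.

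For (c), any positive eigenvalue lies strictly below $\inf\sigma_\ess(\calL)=(c-\kappa)/\kappa^2$ by definition of the essential spectrum, giving the interval bound, and each is simple by the Wronskian argument above. Finiteness of the positive eigenvalues follows from the fact that $\calL-\calL_\infty$ is a relatively compact perturbation with exponentially decaying coefficients, combined with a min--max/Birman--Schwinger count for operators below the bottom of the essential spectrum. The step I expect to require the most care is the rigorous deployment of Sturm oscillation in the non-Schr\"odinger form $-\partial_\xi(p\,\partial_\xi\cdot)+q$ together with the check that the putative eigenfunctions decay at infinity (so that they actually lie in $\Lspace^2(\RR)$); the cleanest route is a Liouville change of variable $\xi\mapsto s(\xi):=\int_0^\xi p(\eta)^{-1/2}\,\dif\eta$, which conjugates $\calL$ to a Schr\"odinger operator $-\partial_s^2+Q(s)$ with $Q$ smooth, bounded, and exponentially convergent to the positive constant $(c-\kappa)/\kappa^2$, after which both the node-counting and the finiteness of bound states reduce to textbook one-dimensional Schr\"odinger theory.
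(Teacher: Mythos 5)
Your proposal is correct and follows essentially the same route as the paper: self-adjointness from the divergence (Sturm--Liouville) form, the kernel from differentiating the traveling-wave equation, Sturm oscillation applied to the single node of $\partial_\xi\mu$ to get exactly one simple negative eigenvalue with a nodeless eigenfunction, and the asymptotic constant-coefficient operator $\calL_\infty=\frac{c}{\kappa^2}(-\partial_\xi^2+1-\kappa/c)$ together with Weyl's theorem for the essential spectrum and the bound $(c-\kappa)/\kappa^2$ on the point spectrum. The extra details you supply (Liouville change of variable, Wronskian simplicity, Birman--Schwinger finiteness) only make explicit what the paper delegates to the cited Sturm--Liouville theorem.
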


\begin{proof}
The divergence form of the differential operator $\calL$ implies its self-adjointness.

(a) It is straightforward from \cref{e:kernel}.

(b) We know, from \cref{c:existmu}, that the kernel eigenfunction $\partial_\xi\mu$ has only one simple zero, hence a Sturm-Liouville theorem (cf.~\cite[Theorem 2.3.3]{KP2015}) implies the statement.

(c) The asymptotic operator $\calL_\infty$ associated with the exponentially asymptotic operator $\calL$, i.e., replacing the coefficients in $\calL$ by their limiting values at $\xi=\pm\infty$, takes the form
\begin{align*}
\calL_\infty = \frac{c}{\kappa^2}\left(-\partial_\xi^2 + 1 - \frac{\kappa}{c}\right).
\end{align*}
From the aforementioned Sturm-Liouville theorem, there exists an upper bound of the point spectrum, whose value is given by the constant term of $\calL_\infty$.

(d) The essential spectrum of $\calL_\infty$ is given by $\{\lambda\in\CC: \lambda = (ck^2 + c-\kappa)/\kappa^2,\,k\in\RR\}$, which is strictly positive due to the condition $c>\kappa$ (cf.,~\cref{t:existphi}). Since $\calL$ is a relatively compact perturbation of $\calL_\infty$, from the Weyl essential spectrum theorem, it follows that $\calL$ and $\calL_\infty$ have the same essential spectra, cf., e.g.,~\cite[Theorem 2.2.6 \& Theorem 3.1.11]{KP2015}.
\end{proof}

\paragraph{Equilibrium solutions} Since \cref{e:CHtravel} has a translation symmetry, we denote the manifold
\begin{align}\label{e:MT}
\calM_\calT:=\{\calT(s)\mu: s\in\RR\}
\end{align}
the set of all the translations of the equilibrium solution $\mu$ to \cref{e:CHtravel}, where $\calT(s)$ is the translation operator, i.e., $(\calT(s)\mu)(\xi):= \mu(\xi+s)$.
We can find a foliation of a neighborhood $B$ of $\calM_\calT$ for which the solution $m=m(t)\in B$ to \cref{e:CHtravel} can be uniquely decomposed as
\begin{align}\label{e:decompose}
m(t) = \calT(s(t))\mu + h(t),\quad \langle h(t),\calT(s(t))\partial_\xi\mu \rangle = 0,
\end{align}
where the small perturbation $h(t)$ is orthogonal to the kernel of the adjoint operator of $\calL$, which is spanned by $\partial_\xi\mu$ due to the self-adjointness of $\calL$, cf.,~\cref{l:opL}, and $s(t)$ is a continuous function such that the decomposition \cref{e:decompose} holds for $t\in[0,t_1)$ for some $t_1>0$. We refer the readers to \cite[Lemma 4.3.3]{KP2015} for more details.

\medskip
For readability, we require the definition of the orbital stability of solitary waves $\phi$ obtained in \cref{t:existphi} to \cref{e:CH}. Equivalently, and for convenience, we define the orbital stability in terms of the traveling wave solution $m = \mu = (1-\partial_\xi^2)\phi$ to \cref{e:CHham} in the moving frame $\xi=x-ct$.
\begin{definition}[Orbitally stable]
Let $m(x,t) = \mu(x-ct)$ be a traveling wave solution to \cref{e:CHham} with $\mu\in\calX_\kappa$, which has a translation symmetry $\calT=\calT(s)$. The manifold $\calM_\calT$ defined in \cref{e:MT} is orbitally stable in $\calX_\kappa$ if for any $\eps>0$ there exists $\delta>0$ such that for any $m_0\in\calX_\kappa$ satisfying $\|m_0-\mu\|_{\Hspace^1}< \delta$, the solution $m(\cdot,t)\in\calX_\kappa$ of \cref{e:CHham} with $m(\cdot,0)=m_0$ satisfies
\[
\inf_{s\in\RR}\|m(\cdot,t)-\calT(s)\mu\|_{\Hspace^1}< \eps,\quad t>0.
\]
\end{definition}

We will show that the $\Hspace^1(\RR)$-norm of the perturbation
\begin{align*}
h(\xi,t) := m(\xi,t) - \calT(s(t))\mu(\xi)
\end{align*}
remains uniformly small for all $t>0$, and we expect that it can be controlled by the difference between $\Lambda(m)$ and $\Lambda(\calT(s)\mu)$, i.e., $\|h\|_{\Hspace^1}^2\leq \alpha |\LG(m) - \LG(\calT(s)\mu)|$ for some $\alpha>0$. This motivates us to define
the time-invariant quantity
\begin{align}\label{e:LGdif}
\overline\LG := \LG(m) - \LG(\calT(s)\mu).
\end{align}
For notational convenience, we assume that $s\equiv0$, so that $\calT(s)=\id$.
Due to the validity of the decomposition \cref{e:decompose}, we can expand the Lagrangian $\LG(m)$ near $m=\mu$
\begin{align}\label{e:expandLambda}
\LG(\mu+h) = \LG(\mu) + \left\langle \frac{\delta\LG}{\delta m}(\mu), h \right\rangle + \frac 1 2 \left\langle \frac{\delta^2\LG}{\delta m^2}(\mu)h, h \right\rangle +  \calR,
\end{align}
where the remainder term $\calR$ satisfies the estimate
\begin{align}\label{e:remainder}
|\calR| \leq C\|h\|_{\Hspace^1}^3,
\end{align}
for some $C>0$. Since this is in some sense standard, we defer the proof in \cref{s:estimate}.
Hence the expansion of $\overline\LG$ in powers of $h$ yields
\begin{align}\label{e:expandLG}
\overline\LG = \frac 1 2 \langle \calL h, h \rangle + \calO(\|h\|_{\Hspace^1}^3).
\end{align}
\cref{l:opL} implies that the bilinear form $\langle \calL h,h \rangle$ is not coercive since $\calL$ has non-positive eigenvalues.
In order to obtain the coercivity of $\calL$, we must consider that such bilinear form acts on a constrained function space.

Let $m=m(t)$ be the solution of \cref{e:CHtravel} with the initial datum $m(0)=m_0$ such that $\calQ(m_0)=\calQ(\mu)$.
Since $\calQ$ is time-invariant, we have the constraint $\calQ(m(t)) = \calQ(\mu)$ for all $t\geq0$.
This constraint and \cref{e:decompose} imply that the perturbation $h(t)$ lies in the nonlinear admissible space
\begin{align*}
\calA_c := \{h\in\Hspace^1(\RR): \calQ(\mu + h)=\calQ(\mu),\, \langle h,\mu_\xi \rangle = 0\}.
\end{align*}
By triangle inequality and $\Hspace^1$ continuity of $\calQ(\mu(c))$ with respect to $c$,  we may assume without loss of generality that $\calQ(\mu+h_0) = \calQ(\mu)$ with the initial perturbation $h_0:=h(0)$.


Since we consider small perturbations $h$, it is sufficient to consider $h$ in the tangent space of $\calA_c$ at $h=0$. Taylor expanding $\calQ$
yields
\begin{align}\label{e:expandQ}
\calQ(\mu+h) -\calQ(\mu) = \left\langle \frac{\delta \calQ}{\delta m}(\mu), h \right\rangle + \calO(\|h\|^2_{\Hspace^1}) =  \langle \psi_\calQ, h \rangle + \calO(\|h\|^2_{\Hspace^1}),
\end{align}
where the variational derivative of $\calQ$ is given by
\begin{align}\label{e:dQ}
\psi_\calQ :=
\frac{\delta \calQ}{\delta m}(\mu) =  -\frac 1 2 \kappa(-\mu^{-2} + 3\mu^{-4}\mu_\xi^2 - 2\mu^{-3}\mu_{\xi\xi}) - \frac 1 2 \kappa^{-1}.
\end{align}
\begin{remark}\label{r:sign}
Substituting \cref{e:murelation} into \cref{e:dQ} and combining the relation \cref{e:ODE2},
yields
\begin{align}\label{e:psiQ}
\psi_\calQ =\frac{(\phi_\xi)^2 - \phi^2 + \kappa^2}{2\kappa(c-\kappa)^2}  = \frac{1}{c-\kappa}\ln\left(\frac{c-\phi}{c-\kappa}\right),
\end{align}
where the second equality arises from \cref{e:energy}, and $\psi_\Q$
is strictly negative for $\kappa<\phi<c$, and thus $\psi_\calQ = (\delta\calQ/\delta m)(\mu(\xi)) <0$ for all $\xi\in\RR$.
\end{remark}

In the tangent space of $\calA_c$, the difference $\calQ(\mu+h) -\calQ(\mu)$ is approximated by the leading order term $\langle \psi_\calQ, h \rangle$. Hence, the linearized version of the nonlinear constraint in $\calA_c$ is the condition $\langle \psi_\calQ, h \rangle=0$, which motivates the following linear admissible space
\begin{align*}
\calA_c' := \{h\in\Hspace^1(\RR): \langle h,\mu_\xi \rangle = \langle h, \psi_\calQ \rangle = 0\}.
\end{align*}
We next characterize the spectrum of $\calL$ induced by the bilinear form $\langle \calL u,v \rangle$ that is constrained on the space $\calA_c'$.

\begin{lemma}[Coercivity]\label{l:coercivity}
If the functional $\calQ(\mu)$ is decreasing in the wave speed $c$, namely
\begin{align}\label{e:dcQ}
\frac{\dif}{\dif c}\calQ(\mu) < 0,
\end{align}
then there exists $\alpha>0$ such that
\begin{align}\label{e:H1coercivity}
\langle \calL h, h \rangle \geq \alpha\|h\|_{\Hspace^1}^2, \quad h\in\calA_c'.
\end{align}
\end{lemma}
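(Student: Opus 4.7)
The plan is to prove the coercivity in two stages: first an $\Lspace^2$-type coercivity on the constrained space $\calA_c'$ using the spectral structure of $\calL$ from \cref{l:opL}, and then an $\Hspace^1$-upgrade exploiting the divergence form of $\calL$.

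The cornerstone of the first stage is the identity $\calL\,\partial_c\mu = \psi_\calQ$, obtained by differentiating the equilibrium condition $(\delta\LG/\delta m)(\mu) = 0$ with respect to the wave speed $c$. Pairing with $\partial_c\mu$ and using $(\dif/\dif c)\calQ(\mu) = \langle\psi_\calQ,\partial_c\mu\rangle$ gives
\[
\langle \calL\,\partial_c\mu,\partial_c\mu\rangle \;=\; \frac{\dif}{\dif c}\calQ(\mu) \;<\; 0,
\]
so $\calL$ is strictly negative along the direction $\partial_c\mu$. Moreover, since $\mu_\xi$ is odd and $\psi_\calQ$ is even (\cref{r:parity}), we have $\psi_\calQ \perp \ker\calL$, so $\partial_c\mu$ plays the role of $\calL^{-1}\psi_\calQ$ modulo the kernel and $\langle \calL^{-1}\psi_\calQ,\psi_\calQ\rangle < 0$. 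I would then argue by contradiction: suppose
\[
\nu_* \;:=\; \inf\bigl\{\langle\calL h,h\rangle : h\in\calA_c',\ \|h\|_{\Lspace^2} = 1\bigr\} \;\leq\; 0.
\]
The positivity of $\sigma_\ess(\calL)$ from \cref{l:opL}(d) guarantees that this infimum is attained, and a minimizer $h_*$ satisfies the Euler--Lagrange relation $\calL h_* = \nu_* h_* + a\psi_\calQ + b\mu_\xi$. Testing against $\mu_\xi$ (which lies in $\ker\calL$) forces $b = 0$ via the parity observation above, while testing against $\partial_c\mu$ together with the identity $\calL\,\partial_c\mu = \psi_\calQ$ and the constraint $\langle h_*,\psi_\calQ\rangle = 0$ yields a linear relation among $\nu_*$, $a$, and $\langle h_*,\partial_c\mu\rangle$. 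Combined with the spectral picture of \cref{l:opL} (one simple negative eigenvalue, the rest of the point spectrum and the essential spectrum strictly positive), this relation, in the spirit of the classical Weinstein--GSS min--max criterion, is only consistent when $(\dif/\dif c)\calQ(\mu) \geq 0$, contradicting the hypothesis. Hence $\nu_* > 0$, i.e.\ there exists $\beta > 0$ with $\langle\calL h,h\rangle \geq \beta\|h\|_{\Lspace^2}^2$ on $\calA_c'$.

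For the $\Hspace^1$ upgrade, I would exploit the divergence form
\[
\langle\calL h,h\rangle \;=\; c\kappa \int_\RR \mu^{-3} h_\xi^2 \,\dif\xi + \int_\RR V(\xi)\, h^2 \,\dif\xi,
\]
where $V\in\Lspace^\infty(\RR)$ collects the zero-order coefficients read off from \cref{e:L}, and $\mu^{-3}$ is bounded below by $M_{c,\kappa}^{-3} > 0$ (\cref{r:rangemu}). This yields $\langle\calL h,h\rangle \geq C_1\|h_\xi\|_{\Lspace^2}^2 - C_2\|h\|_{\Lspace^2}^2$ for some constants $C_1,C_2 > 0$. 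A convex combination $\theta\langle\calL h,h\rangle + (1-\theta)\langle\calL h,h\rangle$ of the $\Lspace^2$-coercivity and this bound, with $\theta \in (0,1)$ chosen so that $\theta\beta > (1-\theta)C_2$, delivers the desired estimate $\langle\calL h,h\rangle \geq \alpha\|h\|_{\Hspace^1}^2$ on $\calA_c'$.

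The main obstacle will be the Lagrange-multiplier step: verifying that the two constraints defining $\calA_c'$, together with the identity $\calL\,\partial_c\mu = \psi_\calQ$ and the spectral structure in \cref{l:opL}, really do eliminate the single negative direction of $\calL$ precisely when $(\dif/\dif c)\calQ(\mu) < 0$. All the remaining ingredients---parity, boundedness of $\mu$, attainment of the infimum, and the $\Lspace^2$-to-$\Hspace^1$ bootstrap---are essentially bookkeeping given the preparations already assembled in \cref{s:existence,s:stability}.
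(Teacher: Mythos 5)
Your proposal follows essentially the same route as the paper: an $\Lspace^2$-coercivity step on $\calA_c'$ driven by the identity $\calL\,\partial_c\mu=\psi_\calQ$ and the sign of $\langle\calL^{-1}\psi_\calQ,\psi_\calQ\rangle=\partial_c\calQ(\mu)$, followed by an $\Hspace^1$ upgrade from the divergence form of $\calL$. The only substantive difference is in the step you yourself flag as the main obstacle, and you are right to flag it: the linear relation $\nu_*\langle h_*,\partial_c\mu\rangle+a\,\partial_c\calQ(\mu)=0$ obtained by testing the Euler--Lagrange equation against $\partial_c\mu$ is not by itself sufficient to force $\nu_*>0$. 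The paper closes this gap by solving the multiplier equation explicitly, $\psi=\nu(\calL_0-\alpha_0)^{-1}\psi_\calQ$ (with $\calL_0=\Pi_0\calL$ the kernel-projected operator), and studying the scalar function $g(\alpha_0)=\langle(\calL_0-\alpha_0)^{-1}\psi_\calQ,\psi_\calQ\rangle$: it is strictly increasing where smooth since $g'(\alpha_0)=\|(\calL_0-\alpha_0)^{-1}\psi_\calQ\|_2^2>0$, analytic on $(\lambda_0,\lambda_1)$, and satisfies $g(0)=\langle\partial_c\mu,\psi_\calQ\rangle=\partial_c\calQ(\mu)<0$, so its first zero $\alpha_0$ is positive; the possibility $\nu_*=\lambda_0$ is excluded because $\psi_0$ has no zeros and $\psi_\calQ$ has one sign (\cref{r:sign}), whence $\langle\psi_0,\psi_\calQ\rangle\neq0$ and $\psi_0\notin\calA_c'$. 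If you supply this monotone-resolvent argument (or the equivalent spectral decomposition of $h_*$), your Lagrange-multiplier framing becomes a complete proof. Your $\Hspace^1$ upgrade via a G\aa rding inequality and a convex combination is a legitimate, slightly more direct alternative to the paper's contradiction argument with a minimizing sequence $\|v_n\|_{\Hspace^1}=1$, $\|v_n\|_2\to0$; both rely on the same lower bound $\mu^{-3}\geq M_{c,\kappa}^{-3}$ from \cref{r:rangemu}.
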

The proof of this lemma is somewhat standard, which is analogous to that for the KdV equation, cf., e.g.,~\cite[Lemma 5.2.3]{KP2015}, with some adaptation due to the different forms of the charge $\calQ$ and the linear operator $\calL$. For completeness, we defer the proof in \cref{s:coercivity}.
\medskip
As a continuation of the $\Hspace^1$-coercivity of $\calL$, and in preparation for the orbital stability of solitary waves, we require the estimate of the bilinear form $\langle \calL h, h \rangle$ for $h\in\calA_c$.

\begin{lemma}\label{l:Aest}
Under the condition \cref{e:dcQ}, the estimate
\begin{align}\label{e:Aest}
\frac 1 2 \langle \calL h, h \rangle \geq \alpha\|h\|_{\Hspace^1}^2 - \beta\|h\|_{\Hspace^1}^3,
\end{align}
holds for $h\in\calA_c$ with some $\alpha,\beta>0$.
\end{lemma}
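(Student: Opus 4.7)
The plan is to reduce the claim on the nonlinear admissible space $\calA_c$ to the linear coercivity estimate of \cref{l:coercivity} on $\calA_c'$ via a small deformation. Given $h\in\calA_c$, I will construct an auxiliary direction $\eta\in\Hspace^1(\RR)$ satisfying $\langle \eta,\mu_\xi\rangle=0$ and $\langle \eta,\psi_\calQ\rangle\neq 0$, and then write
\[
h = h' + a\eta,\qquad a := \frac{\langle h,\psi_\calQ\rangle}{\langle \eta,\psi_\calQ\rangle}.
\]
By construction $\langle h',\psi_\calQ\rangle=0$, and because $\langle h,\mu_\xi\rangle=0$ and $\langle \eta,\mu_\xi\rangle=0$ one also has $\langle h',\mu_\xi\rangle=0$, so $h'\in\calA_c'$.

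The key point is to control $a$. Since $\calQ$ is translation invariant, differentiating $\calQ(\calT(s)\mu)=\calQ(\mu)$ at $s=0$ yields $\langle \psi_\calQ,\mu_\xi\rangle=0$, so the choice $\eta:=\psi_\calQ$ satisfies both requirements (the denominator $\langle \eta,\psi_\calQ\rangle=\|\psi_\calQ\|_{\Lspace^2}^2\neq 0$). Equally well one may take $\eta=\partial_c\mu$ after an orthogonalization against $\mu_\xi$, in which case $\langle \eta,\psi_\calQ\rangle=\tfrac{\dif}{\dif c}\calQ(\mu)\neq 0$ by hypothesis \cref{e:dcQ}. In either case, the nonlinear constraint $\calQ(\mu+h)=\calQ(\mu)$ together with the Taylor expansion \cref{e:expandQ} gives $\langle \psi_\calQ,h\rangle=\calO(\|h\|_{\Hspace^1}^2)$, hence
\[
|a|\ \leq\ C\|h\|_{\Hspace^1}^2,\qquad \|h'\|_{\Hspace^1}\ \geq\ \|h\|_{\Hspace^1}-|a|\,\|\eta\|_{\Hspace^1}\ \geq\ \|h\|_{\Hspace^1}-C\|h\|_{\Hspace^1}^2.
\]

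Next I will expand the quadratic form:
\[
\langle \calL h,h\rangle = \langle \calL h',h'\rangle + 2a\langle \calL h',\eta\rangle + a^2\langle \calL \eta,\eta\rangle.
\]
The first term is bounded below by $\alpha\|h'\|_{\Hspace^1}^2$ from \cref{l:coercivity}, and the above pointwise bound on $\|h'\|_{\Hspace^1}$ gives $\|h'\|_{\Hspace^1}^2\geq \|h\|_{\Hspace^1}^2-2C\|h\|_{\Hspace^1}^3$. The cross term is controlled by the $\Hspace^1\to\Lspace^2$ boundedness of $\calL$ (combined with the interior $\Hspace^2$ regularity of $h'$ through $\calL h'\in\Lspace^2$, or equivalently by $|\langle \calL h',\eta\rangle|\leq\|h'\|_{\Hspace^1}\|\calL^\ast\eta\|_{\Hspace^{-1}}$ since $\eta$ is a fixed smooth function decaying at infinity), yielding $|2a\langle \calL h',\eta\rangle|\leq C\|h\|_{\Hspace^1}^3$; the last term is $\calO(\|h\|_{\Hspace^1}^4)$, hence absorbed in the cubic remainder for $\|h\|_{\Hspace^1}$ sufficiently small. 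Collecting these estimates with a factor $1/2$ produces \cref{e:Aest} with any $\alpha'<\alpha/2$ and a suitable $\beta>0$.

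The main obstacle I anticipate is verifying that $\eta$ is genuinely smooth and localized enough that $\calL\eta\in\Lspace^2(\RR)$ and $\langle \calL h',\eta\rangle$ admits the linear-in-$\|h'\|_{\Hspace^1}$ bound needed for the cross term; this reduces to checking that $\psi_\calQ$ (or $\partial_c\mu$) inherits exponential decay to zero from the exponential decay of $\mu-\kappa$ established in \cref{c:existmu}, which follows from the explicit formula \cref{e:psiQ} and the smooth dependence of the homoclinic orbit on $c$. Once this localization is in hand, the remainder of the argument is a routine bookkeeping of cubic terms.
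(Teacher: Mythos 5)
Your proposal is correct and follows essentially the same route as the paper: decompose $h=h_1+\nu\psi_\calQ$ with $h_1\in\calA_c'$, use the nonlinear constraint $\calQ(\mu+h)=\calQ(\mu)$ together with the Taylor expansion \cref{e:expandQ} to show the coefficient is $\calO(\|h\|_{\Hspace^1}^2)$, and then invoke the coercivity of \cref{l:coercivity} on $\calA_c'$ while absorbing the cross and higher-order terms into the cubic remainder. Your explicit lower bound $\|h'\|_{\Hspace^1}\geq\|h\|_{\Hspace^1}-C\|h\|_{\Hspace^1}^2$ even yields a cleaner, $h$-independent constant $\alpha$ than the paper's formulation.
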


\begin{proof}
The decomposition \cref{e:decomposeY} defines the projection $P:\Yspace_0\to{\rm span}\{\psi_\calQ\}$
\begin{align*}
Pu = \frac{\langle u,\psi_\calQ \rangle}{\langle \psi_\calQ, \psi_\calQ \rangle}\psi_\calQ,
\end{align*}
and its complement $\Pi=\id-P$ defined in \cref{e:projPi},
which can split any perturbation $h\in\calA_c\subset\Yspace_0$ into
\begin{align}\label{e:decomposeh}
h = h_1 + \nu\psi_\calQ
\end{align}
with $h_1 = (\id-P)h = \Pi h \in\calA_c'$ and the coefficient $\nu = \langle h,\psi_\calQ \rangle/\langle \psi_\calQ, \psi_\calQ \rangle$. The nonlinear constraint in $\calA_c$ and Taylor expansion \cref{e:expandQ} indicate that
\begin{align*}
\calQ(\mu+h) - \calQ(\mu) &= \langle \psi_\calQ, h \rangle + \calO(\|h\|_{\Hspace^1}^2)\\
& = \langle \psi_\calQ, h_1 \rangle + \nu\langle \psi_\calQ, \psi_\calQ \rangle + \calO(\|h\|_{\Hspace^1}^2) = 0.
\end{align*}
Since $\langle \psi_\calQ, h_1 \rangle = 0$ for $h_1\in\calA_c'$, the coefficient $\nu$ is of order $\calO(\|h\|_{\Hspace^1}^2)$, and thus $h_1 = \calO(\|h\|_{\Hspace^1})$ due to \cref{e:decomposeh}. If the condition \cref{e:dcQ} is satisfied, then the $\Hspace^1$-coercivity \cref{e:H1coercivity} holds for $h_1\in\calA_c'$ with some $\alpha_1>0$. This implies the following estimate over $\calA_c$
\begin{align*}
\langle \calL h, h \rangle &= \langle \calL (h_1 + \nu\psi_\calQ), h_1 + \nu\psi_\calQ \rangle = \langle \calL h_1, h_1 \rangle + \calO(\|h\|_{\Hspace^1}^3) \geq \alpha_1\|h_1\|_{\Hspace^1}^2 + \calO(\|h\|_{\Hspace^1}^3).
\end{align*}
Hence,
\cref{e:Aest} holds by choosing
$\alpha = \alpha_1\|h_1\|_{\Hspace^1}^2/(2\|h\|_{\Hspace^1}^2)$ and sufficiently large $\beta>0$.
\end{proof}

\begin{lemma}\label{l:Qeq}
The functional $\calQ(\mu)$ is decreasing in $c$, i.e., \cref{e:dcQ}, if and only if
\begin{equation}\label{e:Qincr}
\frac{\dif}{\dif c}\Q(\phi,c)>0, \quad \Q(\phi,c):=\int_{\mathbb{R}}\left(\frac{c-\kappa}{c-\phi}\ln\left(\frac{c-\kappa}{c-\phi}\right)
-\frac{c-\kappa}{c-\phi}+1\right)\dif\xi,
\end{equation}
where $\phi=\phi(\xi;c,\kappa)$ is the solitary wave solution obtained in \cref{t:existphi} which relates $\mu$ through \cref{e:integratedeqn}.
\end{lemma}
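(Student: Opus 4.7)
The plan is to reduce the asserted equivalence to two explicit identities: an algebraic one expressing $\Q(\phi,c)$ as a positive multiple of $\calH(\mu)$, and a pointwise proportionality between the variational derivatives $\frac{\delta\calH}{\delta m}(\mu)$ and $\frac{\delta\calQ}{\delta m}(\mu)$ along the travelling wave. Once these are in hand, differentiating in $c$ under the integral sign closes the argument.

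First, I would insert the algebraic relation $\mu=\kappa(c-\kappa)/(c-\phi)$ from~\cref{e:murelation} into the definition~\cref{e:H} of $\calH$. Setting $y:=\mu/\kappa=(c-\kappa)/(c-\phi)$, the integrand of $\calH(\mu)$ rewrites as $\kappa(y\ln y - y + 1)$, which is precisely $\kappa$ times the integrand of $\Q(\phi,c)$. Hence
\begin{align*}
\calH(\mu)\;=\;\kappa\,\Q(\phi,c),\qquad\text{so}\qquad \frac{\dif}{\dif c}\calH(\mu)\;=\;\kappa\,\frac{\dif}{\dif c}\Q(\phi,c).
\end{align*}

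Second, from the explicit expressions $\frac{\delta\calH}{\delta m}(\mu)=\ln(\mu/\kappa)=\ln\bigl((c-\kappa)/(c-\phi)\bigr)$ and the identity $\psi_\calQ=\frac{1}{c-\kappa}\ln\bigl((c-\phi)/(c-\kappa)\bigr)$ recorded in~\cref{e:psiQ}, the two variational derivatives at $\mu$ satisfy the pointwise proportionality
\begin{align*}
\frac{\delta\calH}{\delta m}(\mu)\;=\;-(c-\kappa)\,\frac{\delta\calQ}{\delta m}(\mu).
\end{align*}
Smooth dependence of $\phi(\cdot;c,\kappa)$ on $c$ (standard parameter dependence for~\cref{e:ODE1}, together with the exponential decay of $\mu-\kappa$ and $\partial_c\mu$ coming from the hyperbolicity of the saddle $(\kappa,0)$ in~\cref{c:existmu}) allows differentiation under the integral sign:
\begin{align*}
\frac{\dif}{\dif c}\calH(\mu)\;=\;\left\langle\frac{\delta\calH}{\delta m}(\mu),\,\partial_c\mu\right\rangle\;=\;-(c-\kappa)\left\langle\frac{\delta\calQ}{\delta m}(\mu),\,\partial_c\mu\right\rangle\;=\;-(c-\kappa)\,\frac{\dif}{\dif c}\calQ(\mu).
\end{align*}
Combining with the first step yields $\kappa\,\frac{\dif}{\dif c}\Q(\phi,c)=-(c-\kappa)\,\frac{\dif}{\dif c}\calQ(\mu)$, and since $\kappa,c-\kappa>0$ under the hypothesis $0<\kappa<c/2$, the two monotonicity statements are equivalent. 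The only mild technicality is the interchange of $\partial_c$ with the spatial integral, which is routine given the uniform exponential localisation of the integrands in $\xi$ on compact $c$-intervals.
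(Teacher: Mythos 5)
Your argument is correct and it lands on exactly the quantitative identity that the paper's proof establishes, namely
\begin{equation*}
\frac{\dif}{\dif c}\calQ(\mu)\;=\;-\frac{\kappa}{c-\kappa}\,\frac{\dif}{\dif c}\Q(\phi,c),
\end{equation*}
from which the stated equivalence is immediate since $\kappa>0$ and $c>\kappa$. The paper gets there in one stroke: it writes $\frac{\dif}{\dif c}\calQ(\mu)=\langle\psi_\calQ,\partial_c\mu\rangle$, substitutes \cref{e:psiQ} and \cref{e:murelation}, and recognizes the resulting integrand as $-\frac{\kappa}{c-\kappa}$ times the $c$-derivative of the integrand of $\Q$ (the point being that $\partial_c\left(y\ln y-y+1\right)=\ln y\,\partial_c y$ for $y=(c-\kappa)/(c-\phi)$). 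You factor that same recognition through the Hamiltonian: the identity $\calH(\mu)=\kappa\,\Q(\phi,c)$ explains \emph{why} $y\ln y-y+1$ is the right antiderivative (it is, up to the factor $\kappa$, the integrand of $\calH$, whose variational derivative is $\ln(m/\kappa)$), and the pointwise relation $\frac{\delta\calH}{\delta m}(\mu)=-(c-\kappa)\psi_\calQ$, read off directly from \cref{e:psiQ}, supplies the proportionality constant. The two routes are computationally equivalent; yours is slightly longer but makes the structure visible, in particular that $\Q$ is nothing but $\kappa^{-1}\calH$ evaluated along the wave. One caution: keep the proportionality derived exactly as you do it, from the explicit formula \cref{e:psiQ}, and not from the stationarity relation \cref{e:steadyLambda}, which would formally suggest the coefficient $-c$ rather than $-(c-\kappa)$; your explicit computation is the one that is consistent with \cref{e:psiQ} and with the final constant $-\kappa/(c-\kappa)$. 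The interchange of $\partial_c$ with the spatial integral that you justify by exponential localization is the same (tacit) step taken in the paper.
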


\begin{proof}
Making use of the relation \cref{e:murelation} and the expression \cref{e:psiQ}, we have
\begin{align*}
\frac{\dif}{\dif c}\calQ(\mu)\ &=\ \langle \psi_\calQ, \partial_c\mu \rangle =\ \int_\RR \left(\frac{1}{c-\kappa}\ln\left(\frac{c-\phi}{c-\kappa}\right)\frac{\dif}{\dif c} \left(\frac{\kappa(c-\kappa)}{c-\phi}\right)\right)\dif\xi\\
&=\ -\frac{\kappa}{c-\kappa}\frac{\dif}{\dif c}\int_\RR \left(\frac{c-\kappa}{c-\phi}\ln\left(\frac{c-\kappa}{c-\phi}\right) - \frac{c-\kappa}{c-\phi}+1\right)\dif\xi =\ -\frac{\kappa}{c-\kappa}\frac{\dif}{\dif c}\Q(\phi,c),
\end{align*}
where the constant $1$ in the integrand of the second integral guarantees the integrability since $\phi\to\kappa$ as $\xi\to\pm\infty$.
Since $c>\kappa$ is required for the existence of solitary waves (cf.,~\cref{t:existphi}), the statement is proved.
\end{proof}

\begin{theorem}\label{t:stable}
For $b=1$ and $\kappa\in(0, c/2)$, the manifold $\calM_\calT$ defined in \cref{e:MT} is orbitally stable in $\calX_\kappa$ as defined in \cref{e:mspace} if \cref{e:Qincr} holds.
\end{theorem}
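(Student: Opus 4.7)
The plan is to follow the Grillakis--Shatah--Strauss framework and to promote the coercivity bound of \cref{l:Aest} into a Lyapunov-type estimate. Fix $\eps>0$ and take any $m_0\in\Xk$ with $\|m_0-\mu\|_{\Hspace^1}<\delta$. As noted just before \cref{l:Aest}, a triangle inequality together with the $\Hspace^1$-continuity of $c\mapsto\calQ(\mu(c))$ allows a harmless adjustment of the wave speed to reduce to the case $\calQ(m_0)=\calQ(\mu)$; this constraint is then preserved along the flow by time-invariance of $\calQ$. On a maximal interval $[0,t_1)$ on which $\|h(t)\|_{\Hspace^1}$ stays small, an implicit-function argument (cf.~\cite[Lemma 4.3.3]{KP2015}) provides the modulation $s(t)$ for which the decomposition \cref{e:decompose} is valid, and hence $h(t)\in\calA_c$.

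The core of the argument is a two-sided bound on the time-independent quantity $\overline\LG$ from \cref{e:LGdif}, which is conserved because $\LG=-\calH-c\calQ$ is built from conserved quantities of \cref{e:CHtravel}. Evaluating the expansion \cref{e:expandLambda} at $t=0$ (with $s\equiv 0$), using $(\delta\LG/\delta m)(\mu)=0$ from \cref{e:steadyLambda}, the divergence structure of $\calL$ in \cref{e:L} (which makes $\langle\calL\cdot,\cdot\rangle$ bounded on $\Hspace^1(\RR)$), and the remainder estimate \cref{e:remainder}, I would obtain
\[
|\overline\LG|\ \leq\ C_1\|h_0\|_{\Hspace^1}^2\ \leq\ C_1\delta^2.
\]
Conversely, invoking \cref{l:Qeq} to convert the hypothesis \cref{e:Qincr} into \cref{e:dcQ} and then applying \cref{l:Aest} to $h(t)\in\calA_c$ together with \cref{e:expandLG}, I would get
\[
\alpha\|h(t)\|_{\Hspace^1}^2 - \beta\|h(t)\|_{\Hspace^1}^3\ \leq\ \tfrac{1}{2}\langle \calL h(t),h(t)\rangle\ \leq\ \overline\LG + C_2\|h(t)\|_{\Hspace^1}^3.
\]
Combining yields the scalar inequality $\alpha\,y(t)^2 \leq C_1\delta^2+(\beta+C_2)\,y(t)^3$ for $y(t):=\|h(t)\|_{\Hspace^1}$.

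A standard continuity/bootstrap argument then closes the proof: choosing $\eps>0$ so small that $\eps<\alpha/(2(\beta+C_2))$ and $\delta>0$ so small that $C_1\delta^2/\alpha<(\eps/2)^2$, the quadratic term dominates the cubic whenever $y(t)<\eps$, so the set $\{t\in[0,t_1): y(t)<\eps\}$ is both open and closed in $[0,t_1)$ and coincides with it. In turn the persistent smallness of $y(t)$ prevents $t_1$ from being finite, giving $\|h(t)\|_{\Hspace^1}<\eps$ uniformly in $t\geq 0$.

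The main obstacle I anticipate is not the algebraic estimate above but the globalization: it relies on a local well-posedness theory for \cref{e:CHham} in $\Xk$ that preserves the positivity $m(t,\cdot)>0$ used throughout (so that $\calL$, $\psi_\calQ$, and the expansion \cref{e:expandLambda} remain well-defined), and on the fact that the modulation $s(t)$ can be extended by the implicit function theorem precisely when $\|h(t)\|_{\Hspace^1}$ is small. Once both ingredients are in place, the bootstrap closes the loop and yields orbital stability of $\calM_\calT$ in $\Xk$.
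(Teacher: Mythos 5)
Your proposal follows exactly the route the paper takes: combine \cref{l:Qeq}, \cref{l:coercivity} and \cref{l:Aest} to get $|\overline\LG|\geq\overline\LG\geq\alpha\|h\|_{\Hspace^1}^2-\tilde\beta\|h\|_{\Hspace^1}^3$, and then close with the standard continuity/bootstrap argument controlling $\|h\|_{\Hspace^1}$ by $|\overline\LG|$, which the paper simply delegates to \cite[Theorem 5.2.2]{KP2015}. Your write-up fills in that delegated step (the two-sided bound on $\overline\LG$ and the open-closed argument) correctly, so it is a more detailed version of the same proof rather than a different one.
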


\begin{proof}
Since \cref{e:Qincr} holds, \cref{l:coercivity,l:Aest,l:Qeq} imply that
\begin{align*}
|\overline\LG|\geq \overline\LG \geq \alpha\|h\|_{\Hspace^1}^2 - \tilde\beta\|h\|_{\Hspace^1}^3,
\end{align*}
for some $\tilde\beta>0$. The remainder is to prove that the $\Hspace^1$-norm of the small perturbation $h$ can be controlled by $|\overline\LG|$, which is analogous to that for the KdV equation, cf., e.g., the proof of \cite[Theorem 5.2.2]{KP2015}, so we omit it here.
\end{proof}

\section{Proof of stability criterion \cref{e:Qincr}}\label{s:criterion}
In this section, we give a proof of the stability criterion \cref{e:Qincr}.
To this end, we need the following preparations that will be used throughout this section and some transformations on the trajectories corresponding to the smooth solitary waves (i.e., the homoclinic orbits).

We first rescale the wave shape of the solitary wave solutions as
\begin{align}\label{e:rescaledSW}
\phi=\kappa+\gamma\varphi,\quad \gamma:=c-\kappa,
\end{align}
where $\gamma\in(c/2,c)$ due to $\kappa\in(0,c/2)$, and $\varphi\in(0,1)$ due to $\phi\in(\kappa,c)$ as in \cref{t:existphi}. In particular, we have
\[
\vphi\in(0,\vphi_0], \quad \vphi_0:=(G_{c,\kappa}-\kappa)/\gamma,
\]
where $G_{c,\kappa}$ is defined as in \cref{r:rangephi}, and we remark that $\vphi_0\to 1$ as $\kappa\to0$ or $c\to\infty$, cf., illustrations in \cref{f:HOMc2,f:HOMk0p2}). Substituting \cref{e:rescaledSW} into the first-order system \cref{e:ODE1}, yields
\begin{equation}\label{e:rescale1stODE}
\frac{\dif\varphi}{\dif\xi}=\psi,\quad
  \frac{\dif\psi}{\dif\xi}=  \vphi - \frac{\kappa\vphi}{\gamma(1-\vphi)},
\end{equation}
where $\psi:=\varphi_\xi$, and the first integral is given by
\begin{equation}\label{e:tildeH}
\widetilde H(\varphi,\psi) :=\gamma(\psi^2-\varphi^2)-2\kappa\varphi-2\kappa\ln(1-\varphi)=\widetilde H_0.
\end{equation}
System \cref{e:rescale1stODE} possesses two equilibria, i.e., the saddle point $(0,0)$ and the center $(1-\kappa/\gamma,0)$, where $0<1-\kappa/\gamma<1$, and a singular line $\varphi=1$.
Since the energy $\widetilde H$ is invariant along the homoclinic orbit and equals to the value at the saddle point, the homoclinic orbit of \cref{e:rescale1stODE} possesses the energy $\widetilde H_0=0$; we illustrate these in \cref{f:homtransa}. Moreover, we call $(\vphi_0,0)$ the \emph{turning point}, which is the rightmost intersection point of the homoclinic orbit and horizontal axis in $(\vphi,\psi)$-plane.

\medskip

We next transform the homoclinic orbit of \cref{e:rescale1stODE} as follows. We reformulate the Hamiltonian \cref{e:tildeH} with the energy $\widetilde H_0 = 0$ as
\begin{equation}\label{e:sepH}
\frac{\psi^2}{\varphi+\ln(1-\varphi)}-\frac{\varphi^2}{\varphi+\ln(1-\varphi)}=\frac{2\kappa}{\gamma},
\end{equation}
where $\varphi+\ln(1-\varphi)<0$ for $\varphi\in(0,1)$. To take advantage of the framework for the monotonicity of the period function as introduced in \cref{s:intro},
we define a new variable $\bpsi$ as
\begin{equation}\label{e:trans}
\bpsi :=\frac{\psi}{\sqrt{-\varphi-\ln(1-\varphi)}}.
\end{equation}
Hence, \cref{e:sepH} can be written as the separate form (in terms of $\varphi,\bpsi,\gamma$)
\begin{equation}\label{e:sepHbar}
H(\varphi,\bpsi) := -\bpsi^2 - \frac{\varphi^2}{\varphi+\ln(1-\varphi)} = h,\quad h:=\frac{2\kappa}{\gamma},
\end{equation}
where $h\in(0,2)$ since $0<\kappa<c/2$, and we denote by
\begin{align}\label{e:Gammah}
\Gammah :=\{(\varphi,\bpsi): H(\varphi,\bpsi) = h\}
\end{align}
such trajectory, see \cref{f:homtransb}. We remark that the level curve $\Gammah$ corresponds to the homoclinic orbit of \cref{e:rescale1stODE} with the energy $\widetilde H_0=0$ from \cref{e:tildeH}.
In fact, this is one of the curves of the first integral of the following Hamiltonian system
\begin{equation}\label{e:ex3}
\frac{\dif\varphi}{\dif\tau}=2\bpsi,\quad
\frac{\dif\bpsi}{\dif\tau}= - \frac{\varphi f(\varphi)}{(1-\varphi)(-\varphi-\ln(1-\varphi))^2},
\end{equation}
where
\[
\dif\tau:=\frac{1}{2}\sqrt{-\varphi-\ln(1-\varphi)}\,\dif\xi,\quad f(\varphi):=\varphi+(1-\varphi)\big(\varphi+2\ln(1-\varphi)\big).
\]
We note that since $\sqrt{-\vphi-\ln(1-\vphi)}$ is strictly positive for $\vphi\in(0,1)$, the change of variables $\tau=\tau(\xi)$ is bijective and thus invertible; and it is easy to know that $f(\varphi)>0$ for $\varphi\in(0,1)$ since $f(0)=0$ and $f'(\varphi)>0$ for $\varphi\in(0,1)$.
We remark, that since $\varphi+\ln(1-\varphi)$ is negative and monotonically decreasing in $\varphi$ for $\varphi\in(0,1)$, the curve of $\Gammah$ has a `hyperbolic' shape, and is to the left of the vertical line $\varphi=1$. The turning point $(\vphi_0,0)$ is invariant under the transform \cref{e:trans}, which can be solved by inserting $\bpsi=0$ into \cref{e:sepHbar}. A significant difference between the first integrals \cref{e:tildeH} and \cref{e:sepHbar} is that $\varphi = 0$ is a singularity of the latter. We find, however, that $-\varphi^2/(\varphi+\ln(1-\varphi))$ approaches $2$ as $\varphi\to0$, hence the intersections of $\Gammah$ and the vertical axis are given by $\bpsi=\pm a$ where $a=a(h):=\sqrt{2-h}>0$. We illustrate these in \cref{f:diagram}.

\tikzset{
   flow/.style =
   {decoration = {markings, mark=at position #1 with {\arrow{>}}},
    postaction = {decorate}
   }}

\tikzset{global scale/.style={
    scale=#1,
    every node/.append style={scale=#1}
  }
}

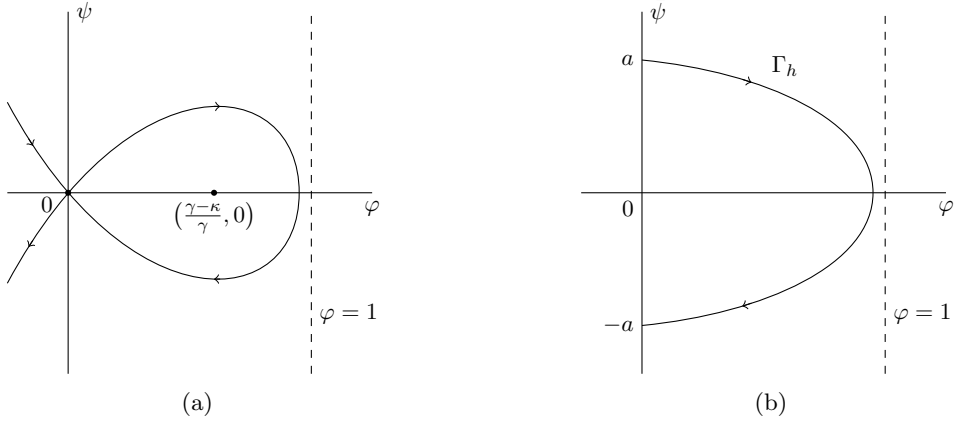
\begin{figure}[t!]
\centering
\subfigure[]{
\begin{tikzpicture}[global scale=0.8]
\draw (-1,0)--(5,0);
\draw (5,0)node[below]{$\varphi$};
\draw (0,-3)--(0,3);
\draw (0,3)node[right]{$\psi$};
\draw[black] (-1,-1.5) .. controls (1,2.25) and (3.8,2).. (3.8,0);
\draw[black] (-1,1.5) .. controls (1,-2.25) and (3.8,-2).. (3.8,0);
\fill[black](0,0)circle(0.05);
\draw[->][black](2.4,1.431)--(2.5,1.43);
\draw[->][black](2.5,-1.43)--(2.4,-1.431);
\draw[->][black](-0.65,0.895)--(-0.58,0.78);
\draw[->][black](-0.58,-0.79)--(-0.65,-0.895);
\draw (-0.35,-0.2)node{$0$};
\fill[black](2.4,0)circle(0.05);
\draw (2.4,0)node[below]{$\big(\frac{\gamma-\kappa}{\gamma},0\big)$};
\draw [dashed](4,-3)--(4,3);
\draw (4,-2) node [right]  {$\varphi=1$};
\end{tikzpicture}
\label{f:homtransa}
}
\hfil
\subfigure[]{
\begin{tikzpicture}[global scale=0.8]
\draw (-1,0)--(5,0);
\draw (5,0)node[below]{$\varphi$};
\draw (0,-3)--(0,3);
\draw (0,3)node[right]{$\bpsi$};
\draw[black] (0,2.2) .. controls (2,2) and (3.8,1.2) .. (3.8,0);
\draw[black] (0,-2.2) .. controls (2,-2) and (3.8,-1.2) .. (3.8,0);
\draw[->][black](1.65,1.88)--(1.8,1.83);
\draw[->][black](1.8,-1.83)--(1.65,-1.88);
\draw (0,0)node[below left]{$0$};
\draw [dashed](4,-3)--(4,3);
\draw (4,-2) node [right]  {$\varphi=1$};
\draw(2,1.8)node[above right]{$\Gammah$};
\draw(0,2.2)node[left]{$a$};
\draw(0,-2.2)node[left]{$-a$};
\end{tikzpicture}
\label{f:homtransb}
}
\caption{
Diagrams of the homoclinic orbit of the rescaled system \cref{e:rescale1stODE} in (a),
and the corresponding trajectory $\Gammah$ defined in \cref{e:Gammah} in (b), where $a=a(h):=\sqrt{2-h}$.
}
\label{f:diagram}
\end{figure}

\begin{lemma}\label{l:equiv}
The criterion \cref{e:Qincr} holds if and only if the functional $\bQ(\vphi)$ satisfies
\begin{align}\label{e:barQ}
\frac{\dif}{\dif h}\bar\Q(\varphi)<0,\quad \bar\Q(\varphi):=2\int_{\R}\frac{(-\varphi-\ln(1-\varphi))^{1/2}}{1-\varphi}\,\dif\tau,
\end{align}
where $\vphi=\vphi(\bpsi,h)$ solves \cref{e:sepHbar}, i.e., $(\vphi,\bpsi)\in\Gammah$
and $\vphi\in(0,1)$, and $h\in(0,2)$ defined as in \cref{e:sepHbar}.
\end{lemma}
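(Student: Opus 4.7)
The plan is a change of variables followed by a chain-rule argument. First, I would apply the rescaling $\phi = \kappa + \gamma\vphi$ with $\gamma = c - \kappa$ from \cref{e:rescaledSW}, noting that $(c-\kappa)/(c-\phi) = 1/(1-\vphi)$. Substituting into the integrand of $\Q(\phi,c)$ in \cref{e:Qincr} and collecting terms, the three summands coalesce into the single expression $(-\vphi - \ln(1-\vphi))/(1-\vphi)$, which is positive on $(0,1)$ and behaves like $\vphi^2/2$ near $\vphi = 0$, so the spatial integral converges absolutely.

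Second, I would change the integration variable from $\xi$ to $\tau$ using $\dif\xi = 2(-\vphi - \ln(1-\vphi))^{-1/2}\,\dif\tau$, read off from the definition of $\tau$ in \cref{e:ex3}. One factor of $(-\vphi - \ln(1-\vphi))^{1/2}$ cancels against the numerator, producing exactly the integrand $2(-\vphi - \ln(1-\vphi))^{1/2}/(1-\vphi)$ of $\bQ(\vphi)$ in \cref{e:barQ}. This gives the pointwise identity $\Q(\phi,c) = \bQ(\vphi)$ along the homoclinic orbit for every admissible pair $(c,\kappa)$.

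Third, I would invoke the chain rule. After rescaling, the planar system \cref{e:ex3} and the trajectory $\Gammah$ depend on $(c,\kappa)$ only through the single combination $h = 2\kappa/\gamma$, so $\bQ$ is genuinely a function of $h$ alone. For fixed $\kappa > 0$ and $c > \kappa$ a direct computation yields $\dif h/\dif c = -2\kappa/(c-\kappa)^2 = -h/\gamma$, and therefore
\[
\frac{\dif}{\dif c}\Q(\phi,c) \;=\; \frac{\dif \bQ}{\dif h}(h)\cdot\frac{\dif h}{\dif c} \;=\; -\frac{h}{\gamma}\,\frac{\dif\bQ}{\dif h}(h).
\]
Since $h/\gamma > 0$, the sign of $\dif\Q/\dif c$ is opposite to that of $\dif\bQ/\dif h$, which is precisely the claimed equivalence.

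The only potentially delicate point is ensuring that $\tau = \tau(\xi)$ gives a well-defined bijection from the homoclinic trajectory onto its $\tau$-image, together with integrability of the transformed integrand; both are immediate, because $\sqrt{-\vphi-\ln(1-\vphi)} > 0$ on $(0,1)$ (noted after \cref{e:ex3}) and the new integrand vanishes at the saddle endpoint $\vphi = 0$. Apart from this bookkeeping the lemma reduces to the substitution above, so I do not expect any substantive obstacle.
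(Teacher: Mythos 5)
Your proposal is correct and follows essentially the same route as the paper's proof: rewrite the integrand of $\Q(\phi,c)$ via the rescaling \cref{e:rescaledSW} as $(-\vphi-\ln(1-\vphi))/(1-\vphi)$, pass from $\xi$ to $\tau$ to obtain $\Q(\phi,c)=\bQ(\vphi)$, and conclude from $\dif h/\dif c=-2\kappa(c-\kappa)^{-2}<0$ that the two monotonicity conditions are equivalent. Your added remarks on integrability near $\vphi=0$ and on $\bQ$ depending on $(c,\kappa)$ only through $h$ are correct bookkeeping that the paper leaves implicit.
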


\begin{remark}\label{r:transform}
The significance of the transformations from the original system \cref{e:ODE1} to the Hamiltonian system \cref{e:ex3} is purely technical, i.e., this can simplify the computation of $\frac{\dif}{\dif c}\Q(\phi,c)$ in \cref{e:Qincr} and make it more likely to determine its positivity.
Specifically, in contrast to the integral $Q(\phi,c)$ in \cref{e:Qincr} that explicitly depends on the variable $c$, the integral $\bar\Q(\varphi)$ in \cref{e:barQ} depends on the variable $h$ implicitly under the transformations. This benefits from the special form of the transformed Hamiltonian \cref{e:sepHbar}, which has the separate form in terms of the variables and the singularity at $\vphi=0$.
\end{remark}

\begin{proof}[Proof of \cref{l:equiv}]
Using the rescalings \cref{e:rescaledSW,e:trans}, and $\dif\tau=\frac{1}{2}\sqrt{-\varphi-\ln(1-\varphi)}\,\dif\xi$,
the functional $Q(\phi,c)$ defined as in \cref{e:Qincr} can be transformed into
\begin{align*}
\Q(\phi,c)\ &=\ \int_{\mathbb{R}}\left(\frac{c-\kappa}{c-\phi}\ln\left(\frac{c-\kappa}{c-\phi}\right)
-\frac{c-\kappa}{c-\phi}+1\right)\,\dif\xi\\
&=\  \int_{\mathbb{R}}\left(\frac{1}{1-\varphi}\ln\left(\frac{1}{1-\varphi}\right)-\frac{1}{1-\varphi}+1\right)\dif\xi\\
&=\ 2\int_{\R}\frac{(-\varphi-\ln(1-\varphi))^{1/2}}{1-\varphi}\,\dif\tau\ =\ \bar\Q(\varphi).
\end{align*}
Since $\gamma = c-\kappa$ from \cref{e:rescaledSW} and $h = 2\kappa/\gamma$ on $\Gammah$, we have $h = 2\kappa/(c-\kappa)$ and it leads to $\dif h/\dif c = -2\kappa(c-\kappa)^{-2} < 0$. Hence the statement holds.
\end{proof}

\begin{lemma}\label{l:criterion}
The criterion \cref{e:barQ} holds for $\vphi\in(0,1)$ and $h$ as in \cref{l:equiv}.
\end{lemma}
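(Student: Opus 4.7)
My plan is to rewrite $\bar\Q(h)$ as a regular integral in the $(\varphi,\bpsi)$-plane, differentiate under the integral sign, and reduce the monotonicity criterion to a pointwise algebraic positivity statement along the homoclinic orbit, in the spirit of the period-function framework described in \cref{s:intro}. Parameterizing $\Gammah$ via $\dif\varphi/\dif\tau = 2\bpsi$ together with $\bpsi^2 = F(\varphi)-h$ for $F(\varphi) := -\varphi^2/(\varphi+\ln(1-\varphi))$, and making the change of variable $t=\sqrt{F(\varphi)-h}$, the $\tau$-integral becomes
\[
\bar\Q(h) = 4\int_0^{\sqrt{2-h}}\tilde B(\varphi(t,h))\,\dif t,\qquad \tilde B(\varphi) := \frac{p(\varphi)^{5/2}}{\varphi\,\eta(\varphi)},
\]
where $p(\varphi):=-\varphi-\ln(1-\varphi)$, $\eta(\varphi):=\varphi^2-2(1-\varphi)p(\varphi)$, and $\varphi(t,h)$ is defined implicitly by $F(\varphi)=t^2+h$. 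One checks that $F$ is a smooth, strictly decreasing bijection $(0,1)\to(0,2)$ with $F'(0)=-4/3$, and that $\tilde B$ extends continuously by $0$ at $\varphi=0$, so the integrand is smooth on $[0,\sqrt{2-h}]$.

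Differentiating under the integral sign, using $\partial_h\varphi = 1/F'(\varphi)$ at fixed $t$ and the vanishing of the boundary term at $t=\sqrt{2-h}$ (since $\tilde B(0)=0$), yields
\[
\bar\Q'(h) = 4\int_0^{\sqrt{2-h}}\frac{\tilde B'(\varphi(t,h))}{F'(\varphi(t,h))}\,\dif t.
\]
Because $F'(\varphi)<0$ throughout $(0,1)$, the desired monotonicity $\bar\Q'(h)<0$ reduces to proving that $\tilde B$ is strictly increasing on $(0,1)$. A short logarithmic differentiation, using $p' = \varphi/(1-\varphi)$ and $\eta'=2p$, shows that $\tilde B'(\varphi)>0$ is equivalent to the algebraic inequality
\[
D(\varphi) := 5\varphi^4 - 12\varphi^2(1-\varphi)p(\varphi) + 4(1-\varphi)(1-2\varphi)p(\varphi)^2 > 0,\qquad \varphi\in(0,1).
\]

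The main obstacle is establishing this transcendental inequality $D>0$, and I plan to exploit the clean factorization of $D$ viewed as a quadratic in $p$: its discriminant equals $16\varphi^4(1-\varphi)(4+\varphi)$, so its roots in $p$ are explicit, and the task reduces to verifying that the actual value $p(\varphi)=-\varphi-\ln(1-\varphi)$ lies on the positivity side. Substituting $p=\varphi^2/F$ yields the geometrically transparent equivalent form
\[
F(\varphi) > F_+(\varphi) := \tfrac{2}{5}\bigl[3(1-\varphi) + \sqrt{(1-\varphi)(4+\varphi)}\bigr],\qquad \varphi\in(0,1),
\]
where $F$ and $F_+$ coincide at both endpoints $\varphi\in\{0,1\}$. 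Matched asymptotic expansions give $F(\varphi)-F_+(\varphi) = \varphi/6 + O(\varphi^2)$ near $\varphi=0$ and a positive leading behavior near $\varphi=1$; the remaining interior positivity would follow either from a monotonicity analysis of $F-F_+$ that rules out interior zeros, or by expanding $D$ as a convergent power series in $\varphi$ starting from $D(\varphi) = \varphi^5/3 + O(\varphi^6)$ and verifying that each coefficient is strictly positive via harmonic-number identities. The numerical near-vanishing of $D$ over a large portion of $(0,1)$ is what makes this last step the principal technical difficulty of the argument.
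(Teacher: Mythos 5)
Your overall strategy is sound and genuinely different from the paper's. You differentiate $\bar\Q$ directly under the integral sign in the $\bpsi$-parameterization (the boundary term at $\vphi=0$ indeed vanishes since your $\tilde B$ extends by $0$ there), whereas the paper first forms $h\cQ(h)$, uses $h=H(\vphi,\bpsi)$ to split it into two line integrals $I_1,I_2$ along $\Gammah$, and only then differentiates. Your reduction is algebraically correct: with $\eta=f$ the paper's function, logarithmic differentiation of $\tilde B=p^{5/2}/(\vphi f)$ does give exactly your $D(\vphi)>0$ as the positivity target, and your expansions $D(\vphi)=\vphi^5/3+O(\vphi^6)$ and $F-F_+=\vphi/6+O(\vphi^2)$ check out. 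The trade-off is significant, though: the paper's detour through $h\cQ(h)$ produces the integrand factor $4\bigl(\vphi^2-(1-\vphi)\ln^2(1-\vphi)\bigr)$, whose positivity follows from a two-line convexity argument ($F''>0$, $F'(0)=F(0)=0$), while your direct route lands on a considerably harder inequality.

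That final inequality is where the proposal has a genuine gap. You do not prove $D(\vphi)>0$ (equivalently $F>F_+$); you only list two candidate strategies ("a monotonicity analysis of $F-F_+$ that rules out interior zeros, or ... verifying that each coefficient is strictly positive via harmonic-number identities") without carrying either out. Since the entire content of \cref{l:criterion} is precisely such a positivity verification, this cannot be left as a plan. Moreover, the inequality you must prove is quantitatively delicate: $D$ is a fifth-order quantity arising from cancellation of fourth-order terms, and numerically $D/(5\vphi^4)$ is below $2\%$ for $\vphi\lesssim 0.2$ (e.g.\ $D(0.1)\approx 3.8\times 10^{-6}$ against individual terms of size $5\times 10^{-4}$), so no crude term-by-term bound will work; the power-series route requires controlling all coefficients, not just the leading one, and the $F>F_+$ route requires an argument at an interior near-tangency, neither of which is routine. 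Until one of these is executed, the proof is incomplete at its central step; alternatively, adopting the paper's $h\cQ(h)$ smoothing first would replace $D>0$ by a positivity statement that admits an elementary proof.
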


\begin{proof}
We rewrite $\bQ(\vphi)$ as the following
\begin{align*}
\bar{Q}(\varphi)&=2\int_{\mathbb{R}}\frac{(-\varphi-\ln(1-\varphi))^{1/2}}{1-\varphi}\,\dif\tau\\
&=-2\int_{\Gammah}\frac{(-\varphi-\ln(1-\varphi))^{5/2}}
{\varphi f(\varphi)}\,\dif\bpsi = -2\int_{\Gammah}g(\varphi)\,\dif\bpsi =:\cQ(h),
\end{align*}
where $g(\vphi) := (-\varphi-\ln(1-\varphi))^{5/2}/(\varphi f(\varphi))$ and the second equality stems from the change of variables in the second equation of \cref{e:ex3}.

Multiplying both sides of the last equality presented above by $h$ and using the expression of $h=H(\vphi,\bpsi)$ from \cref{e:sepHbar}, where $H(\varphi,\bpsi)$ is constant along the trajectory $\Gammah$, as well as using the expression of $\dif\vphi/\dif\bpsi$ given by the quotient of two equations in \cref{e:ex3}, yields
\begin{align}
h\cQ(h)&=-2h\int_{\Gammah}g(\varphi)\,\dif\bpsi
=-2\left(\int_{\Gammah}-\frac{\varphi^2g(\varphi)}{\varphi+\ln(1-\varphi)}\,\dif\bpsi
+\int_{\Gammah}-\bpsi^2 g(\varphi)\,\dif\bpsi\right) \nonumber\\
&=-2\left(\int_{\Gammah}\frac{\varphi(-\varphi-\ln(1-\varphi))^{3/2}}
{f(\varphi)}\,\dif\bpsi
+\int_{\Gammah}\frac{(-\varphi-\ln(1-\varphi))^{1/2}\bpsi}{2(1-\varphi)}\,\dif\varphi\right) \nonumber\\
&\triangleq -2\big(I_1(h)+I_2(h)\big), \label{e:hbarQ}
\end{align}
Differentiating both sides of \cref{e:hbarQ} with respect to $h$, with the notation $':=\dif/\dif h$, yields
\begin{equation*}
\cQ(h)+ h\cQ'(h)=-2\big(I'_1(h)+I'_2(h)\big),
\end{equation*}
where
\begin{equation*}
\begin{split}
I'_1(h)=&\ \int_{\Gammah}\left(\frac{\partial}{\partial \vphi}\left(\frac{\varphi\big(-\varphi-\ln(1-\varphi)\big)^{3/2}}
{f(\varphi)}\right)\cdot\frac{\partial\varphi}{\partial h}\right)\dif\bpsi\\
=&\ \int_{\Gammah}\Bigg(-\frac{(-\varphi-\ln(1-\varphi))^{5/2}}
{2\varphi f^3(\varphi)}\cdot\Big(4\varphi\ln^2(1-\varphi)-4\ln^2(1-\varphi)\\
&\ \qquad-4\varphi^3\ln(1-\varphi)+12\varphi^2\ln(1-\varphi)-8\varphi\ln(1-\varphi)-\varphi^4
+8\varphi^3-4\varphi^2\Big)\Bigg)\,\dif\bpsi,
\end{split}
\end{equation*}
and
\begin{equation*}
\begin{split}
I'_2(h)&=\int_{\Gammah}\left(\frac{(-\varphi-\ln(1-\varphi))^{1/2}}{2(1-\varphi)}
\cdot\frac{\partial\bpsi}{\partial h}\right)\dif\varphi=\int_{\Gammah}\left(\frac{(-\varphi-\ln(1-\varphi))^{1/2}}{2(1-\varphi)}\cdot\left(-\frac{1}{2\bpsi}\right)\right)\dif\varphi\\
&=\int_{\Gammah}\frac{(-\varphi-\ln(1-\varphi))^{5/2}}
{2\varphi f(\varphi)}\,\dif\bpsi,\\
\end{split}
\end{equation*}
where $\partial\vphi/\partial h$ and $\partial\bpsi/\partial h$ can be solved by differentiating \cref{e:sepHbar} with respect to $h$. In addition, we note that the integrals $I_j(h),I'_j(h),j=1,2$ are Riemann integrals since their integrands are bounded for $\vphi\in(0,\vphi_0]$, i.e., for $(\vphi,\bpsi)\in\Gammah$.
It follows that
\begin{align*}
h\cQ'(h)=&\ -2(I'_1(h)+I'_2(h))-\cQ(h)
= \int_{\Gammah}G(\vphi) F(\vphi)\,\dif\bpsi\\
=&\ \int_a^{-a} G(\vphi) F(\vphi)\,\dif\bpsi
= -2\int_0^{a}G(\vphi)F(\vphi)\,\dif\bpsi.
\end{align*}
where $a=\sqrt{2-h}$, and $G(\vphi):= (-\vphi - \ln(1-\vphi))^{5/2}/f^3(\vphi)$ with $G(\vphi)>0$ for $\vphi\in(0,1)$, and
\begin{equation*}
F(\varphi):=4\left(\varphi \ln^2(1-\varphi)-\ln^2(1-\varphi)+\varphi^2\right).
\end{equation*}
Since $h>0$, a sufficient condition for $\cQ'(h)<0$ is that $F(\vphi)>0$ for $\vphi\in(0,1)$, and in the remainder of this proof we will show the latter.

Differentiating $F(\varphi)$ twice with respect to $\varphi$, with abuse of notation $':=\dif/\dif\vphi$, yields
\begin{align*}
F'(\vphi) = 8\vphi + 4\ln(1-\vphi) (2 + \ln(1-\vphi)),\quad F''(\varphi) =\frac{8(\varphi+\ln(1-\varphi))}{\varphi-1}.
\end{align*}
We consider only $\vphi\in(0,1)$. We note that $F''(\vphi)>0$
since $\vphi+\ln(1-\vphi)$ is negative as mentioned above, and it follows that $F'(\varphi)$ is monotonically increasing in $\vphi$. Since $\lim_{\varphi\to0}F'(\varphi)=0$, we have $F'(\varphi)>0$,
which means that $F(\varphi)$ monotonically increases in $\vphi$. Combining this with the fact that $\lim_{\varphi\to0}F(\varphi)=0$, we obtain $F(\varphi)>0$.
\end{proof}

\section{Discussion}\label{s:discussion}

We have studied the orbital stability of smooth solitary waves of the $b$-family of Camassa-Holm equations for $0<b\leq1$. For $0<b<1$, the analysis of the stability is analogous to that of the case $b>1$ presented in \cite{LP2022}, and the stability criterion has been verified analytically by \cite{LL2023}. By comparison, the Hamiltonian structure is different for the case of $b=1$, and thus we have provided a rigorous proof of the orbital stability of solitary waves of \cref{e:CH}, which is based on a general framework motivated by the approach of \cite{GSS1987}. We have further verified the stability criterion \cref{e:Qincr} using the method analogous to that in \cite{LL2023}, which is again the application of the framework of monotonicity of the period function in planar Hamiltonian systems.

We also expect that the combination of the approach of \cite{GSS1987} and the method of monotonicity of the period function may be applied to the stability analysis of \cref{e:bCH} for $b<0$, and it would be interesting to pursue this further.
Moreover, we expect this method to be effective not only for
the $b$-CH equations, but also for other types of Hamiltonian systems.
Another interesting direction would be considering the orbitally asymptotic stability of solitary waves of \cref{e:bCH} with respect to small perturbations in some function spaces, cf., e.g., \cite{PW1994} for KdV equations and \cite{Molinet18,Molinet20} for peakons of CH equations.

\appendix

\section{Monotonicity of $G_{c,\kappa}$}\label{s:monotone}

For convenience, we suppress the subscript of $G_{c,\kappa}$. Differentiating the both sides of $E(G,0) = E_\hom$ with respect to $c$, and solving $\partial_c G$, yields
\begin{align*}
\partial_c G = \frac{\kappa(c-G)}{(G-\kappa)(G-c+\kappa)}\left(\frac{c-\kappa}{c-G} - \ln\left(\frac{c-\kappa}{c-G}\right) - 1\right)>0,
\end{align*}
since $\kappa<c-\kappa<G<c$ and the function $f(w) := w - \ln w - 1 >0$  for $w>1$. Analogously, differentiating both sides of $E(G,0) = E_\hom$ with respect to $\kappa$ and solving $\partial_\kappa G$, yields
\begin{align*}
\partial_\kappa G = -\frac{(c-2\kappa)(c-G)}{(G-c+\kappa)(G-\kappa)}\ln\left(\frac{c-\kappa}{c-G}\right) < 0.
\end{align*}

\section{Operator $\Jm$}\label{s:opJ}
We recall that $m\in\calX_\kappa$ defined in \cref{e:mspace}, and denote $\tm:= m-\kappa\in\Hspace^1(\RR)$. Let $\psi\in\Hspace^1(\RR)$, we have $\partial_x\psi\in\Lspace^2(\RR)$ and denote $v:=\partial_x\psi$. Then the product $mv\in\Lspace^2(\RR)$ since
\begin{align*}
\|mv\|_2^2 = \int_\RR |\tm v+\kappa v|^2\,\dif x &\leq 2 \int_\RR \left(|\tm v|^2 + |\kappa v|^2\right)\dif x\\
& \leq 2(\|\tm\|_\infty^2 + \kappa^2)\|v\|_2^2 \leq 2(\|\tm\|_{\Hspace^1}^2 + \kappa^2)\|v\|_2^2,
\end{align*}
which follows from the Sobolev embedding $\Hspace^1(\RR)\subset\Lspace^\infty(\RR)$. Since the operator $\partial_x:\Hspace^1(\RR)\to\Lspace^2(\RR)$ has no kernel, it is invertible and the range of $\partial_x^{-1}$ lies in $\Hspace^1(\RR)$, which leads to $\partial_x^{-1}(mv)\in\Hspace^1(\RR)$. We denote $w:=\partial_x^{-1}(mv)$ and recall the representation of $(1-\partial_x^2)^{-1}$ in \cref{e:convolution}, then we have $(1-\partial_x^2)^{-1}w\in\Hspace^1(\RR)$ since
\begin{align*}
\|(1-\partial_x^2)^{-1}w\|_{\Hspace^1}^2 &= \|g*w\|_{\Hspace^1}^2 = \|g*w\|_2^2 + \|\partial_x(g*w)\|_2^2 = \|g*w\|_2^2 + \|g*\partial_x w\|_2^2\\
&\leq \|g\|_1^2\|w\|_2^2 + \|g\|_1^2\|\partial_xw\|_2^2 = \|g\|_1^2\|w\|_{\Hspace^1}^2,
\end{align*}
where $g\in\Lspace^1(\RR)$.
Denote $z:=(1-\partial_x^2)^{-1}w$, since $\Hspace^1(\RR)$ is a Banach algebra, we have
\begin{align*}
\|m z\|_{\Hspace^1}^2 &= \|m z\|_2^2 + \|\partial_x(m z)\|_2^2 = \int_\RR |\tm z+\kappa z|^2\,\dif x + \int_\RR |\partial_x(\tm z)+\kappa \partial_x z|^2\,\dif x \\
&\leq 2\int_\RR \left(|\tm z|^2+|\kappa z|^2\right)\dif x + 2\int_\RR \left(|\partial_x(\tm z)|^2 + |\kappa \partial_x z|^2\right)\dif x  = 2\|\tm z\|_{\Hspace^1}^2 + 2\kappa^2\|z\|_{\Hspace^1}^2\\
&\leq 2\|\tm\|_{\Hspace^1}^2\|z\|_{\Hspace^1}^2 + 2\kappa^2\|z\|_{\Hspace^1}^2,
\end{align*}
i.e., $mz\in\Hspace^1(\RR)$.
It follows that $\partial_x(mz)\in\Lspace^2(\RR)$, which means $\Jm\psi\in\Lspace^2(\RR)$.

\section{Invertibility of $\Jm$}\label{s:nokernel}
We rewrite the operator $\Jm$ in \cref{e:J} as
\begin{align*}
\Jm = -\partial_x\widetilde\Jm\ &:\ \Hspace^1(\RR) \to \Lspace^2(\RR),\\
\widetilde\Jm := m(1-\partial_x^2)^{-1}\partial_x^{-1}(m\partial_x\cdot)\ &:\  \Hspace^1(\RR) \to \Hspace^1(\RR),
\end{align*}
with $m\in\calX_\kappa$ defined in \cref{e:mspace}. We assume that $\Jm$ has a kernel $\psi\in\Hspace^1(\RR)$. Since the operator $\partial_x$ has no kernel on $\Lspace^2(\RR)$, $\psi\in\ker(\widetilde\Jm)$.
Since
$m(x)>0$
for all $x\in\RR$ and the operator $1-\partial_x^2$ has no kernel on $\Lspace^2(\RR)$, we deduce that
\begin{align*}
(1-\partial_x^2)^{-1}\partial_x^{-1}(m\partial_x \psi) = 0\quad \Rightarrow \quad m\partial_x\psi = 0\quad \Rightarrow\quad \partial_x\psi = 0,
\end{align*}
which contradicts the fact that
$\partial_x$ has no kernel on $\Lspace^2(\RR)$.

\section{Variational derivative of $\calQ$}\label{s:Casimirs}
For convenience, we denote $q:=\frac{\delta \calQ}{\delta m}(m)$. We may solve $q$ from the following equation
\begin{align*}
\Jm q = -\partial_x\big(m(1-\partial_x^2)^{-1}\partial_x^{-1}(m\partial_x q)\big) = \partial_x m.
\end{align*}
Integrating both sides with respect to $x$, and using the notation $\widetilde\Jm$ in \cref{s:nokernel}, yields
\begin{align*}
\widetilde\Jm q = m(1-\partial_x^2)^{-1}\partial_x^{-1}(m\partial_x q) = -m + k_2,
\end{align*}
where the integration constant $k_2=\kappa$ since the range of $\widetilde\Jm$ lies in $\Lspace^2(\RR)$ and $m\in\calX_\kappa$.
Since $m(x)>0$, we can apply the inverse operator $m^{-1}\partial_x(1-\partial_x^2)m^{-1}$ to the both sides, yields
\begin{align*}
\partial_x q \ =\ k_2(-m^{-3}m_x + 6m^{-5}m_x^3 - 6m^{-4}m_x m_{xx} + m^{-3}m_{xxx}).
\end{align*}
Then integrating both sides with respect to $x$, yields
\begin{align*}
q \ &=\ -\frac 1 2 k_2(-m^{-2} + 3m^{-4}m_x^2 - 2m^{-3}m_{xx}) + k_1\\
&=\ -\frac 1 2 k_2 \frac{\delta \calQ_2}{\delta m}(m) + k_1\frac{\delta \calQ_1}{\delta m}(m),
\end{align*}
where $k_1 = -1/(2\kappa)$ since $q\in\Hspace^1(\RR)$,
and
\[
\frac{\delta \calQ_2}{\delta m}(m) := -m^{-2} + 3m^{-4}m_x^2 - 2m^{-3}m_{xx},\quad \frac{\delta \calQ_1}{\delta m}(m) := 1,
\]
are the variational derivatives of $\calQ_2$ and $\calQ_1$ defined in \cref{e:Qj}, and the selections of $k_1,k_2$ above give rise to the form of $\calQ$ in \cref{e:charge}.

\section{Proof of estimate \cref{e:remainder}}\label{s:estimate}
The remainder term $\calR$ in \cref{e:expandLambda} has the expression
\begin{align*}
\calR &= \Lambda(\mu+h) - \Lambda(\mu) - \left\langle \frac{\delta\LG}{\delta m}(\mu), h \right\rangle - \frac 1 2 \left\langle \frac{\delta^2\LG}{\delta m^2}(\mu)h, h \right\rangle\\
&= \int_\RR \Big(h (f_{30}h^2 + 2f_{21} h h_\xi + f_{12} h_\xi^2) + \calO(h^2(h^2+h_\xi^2))\Big) \dif\xi,
\end{align*}
where the prefactors $f_{30}:= - \mu^{-2}/6 - \mu^{-4} - 10\mu^{-6}\mu_\xi^2$, $f_{21}:= 6\mu^{-5}\mu_\xi$, $f_{12}:= -3\mu^{-4}$, which are bounded for all $\xi\in\RR$ (but not uniformly in parameters $c$ or $\kappa$) due to the expressions in \cref{e:murelation} with bounded $\phi,\phi_\xi$. Using Cauchy's inequality and the Sobolev embedding $\Hspace^1(\RR)\subset\Lspace^\infty(\RR)$,
the remainder term $\calR$ satisfies the estimate
\begin{align*}
|\calR|
&\leq \int_\RR \Big(|h| \big(|f_{30}| + |f_{21}|)h^2 + (|f_{12}|+|f_{21}|) h_\xi^2\big) + \calO(h^2(h^2+h_\xi^2))\Big) \dif\xi\\
&\leq C_1\|h\|_\infty\|h\|_{\Hspace^1}^2 + \calO(\|h\|_\infty^2\|h\|_{\Hspace^1}^2) \leq C_1\|h\|_{\Hspace^1}^3 + \calO(\|h\|_{\Hspace^1}^4)
\end{align*}
for the constant $C_1:=\sup_{\xi\in\RR}\{|f_{30}|+|f_{21}|,|f_{12}|+|f_{21}|\}$.

\section{Proof of \cref{l:coercivity}}\label{s:coercivity}

To obtain the coercivity of $\calL$, we have to deal with its kernel and the eigenfunction associated with the negative eigenvalue. For convenience, we use the notations of function spaces as in \cref{e:fctspace} in the proof.

We first consider the kernel of $\calL$. We introduce the spectral projection onto the kernel of $\calL$, i.e., $P_0:\Xspace\to \ker(\calL)$
\begin{align*}
P_0 u = \frac{\langle u,\mu_\xi \rangle}{\langle \mu_\xi,\mu_\xi \rangle}\mu_\xi,
\end{align*}
and its complement $\Pi_0 = \id - P_0$ which has the kernel spanned by $\mu_\xi$,
and has the range
\begin{align*}
\Xspace_0 = \{h\in\Xspace: \langle h, \mu_\xi \rangle =0\} = \ker(\calL)^\perp,
\end{align*}
i.e., $\Pi_0:\Xspace\to\ker(\calL)^\perp$ since $\langle \Pi_0 u,\mu_\xi \rangle = 0$ for any $u\in\Xspace$. We observe that
\begin{align*}
\calA_c' = {\rm span}\{\mu_\xi\}^\perp \cap {\rm span}\{\psi_\calQ\}^\perp \cap \Yspace,
\end{align*}
and decompose the new space $\Yspace_0:=\Xspace_0\cap \Yspace$ as the following
\begin{align}\label{e:decomposeY}
\Yspace_0 = \calA_c' \oplus {\rm span}\{\psi_\calQ\}.
\end{align}
Since both $P_0$ and $\Pi_0$ are self-adjoint operators and $\Pi_0$ is an identity operator on $\Yspace_0$, for any $u,v\in\Yspace_0$ we have the relations
\begin{align*}
\langle \calL u,v \rangle = \langle \calL u, \Pi_0 v \rangle = \langle \Pi_0\calL u, v \rangle.
\end{align*}
This motivates the definition $\calL_0 :=\Pi_0\calL$, which maps $\calL_0:D(\calL)\cap\Yspace_0\subset \Xspace_0\to \Xspace_0$. The spectral projection $P_0$ eliminates the kernel of $\calL$ from the space $\Xspace$, which leads to the spectrum $\sigma(\calL_0) = \sigma(\calL)\setminus\{0\}$, and thus $\calL_0$ is boundedly invertible which has a single negative eigenvalue $\lambda_0$ associated with eigenfunction $\psi_0$, with the rest of $\sigma(\calL_0)$ strictly positive, cf.,~\cref{l:opL}.

We next consider the bilinear form $\langle \calL u,v \rangle$ that is constrained on the space $\calA_c'$. We introduce the self-adjoint projection
\begin{align}\label{e:projPi}
\Pi u = u - \frac{\langle u,\psi_\calQ \rangle}{\langle \psi_\calQ, \psi_\calQ \rangle}\psi_\calQ.
\end{align}
Since $\psi_\calQ$ is not the eigenfunction of $\calL$, $\Pi$ is not a spectral projection for $\calL$, nor for $\calL_0$, and thus the projection $\Pi$ does not remove any eigenvalue from $\calL$, nor from $\calL_0$.
The parities of $\mu$ and its derivatives claimed in \cref{r:parity} implies that
\begin{align}\label{e:orthogonal}
\langle \psi_\calQ, \mu_\xi \rangle=0,
\end{align}
since $\psi_\calQ\mu_\xi$ is odd with mean zero.
Hence, we deduce that
$\Pi_0\Pi = \Pi\Pi_0$, and thus $\Pi:\Yspace_0\to \calA_c'$; moreover, $\Pi\Pi_0$ is the identity operator on $\calA_c'$.
Hence, for any $u,v\in\calA_c'$, the relations
\[
\langle \calL u,v \rangle = \langle \calL u, \Pi\Pi_0 v \rangle = \langle \Pi\calL u, \Pi_0 v \rangle = \langle \Pi_0\Pi\calL u, v \rangle = \langle \Pi\Pi_0\calL u, v \rangle
\]
motivates the definition $\calL_\Pi := \Pi\calL_0$, which maps
$\calL_\Pi: D(\calL)\cap\calA_c'\subset\Xspace_0 \to \Pi\Xspace_0$.
The constrained operator $\calL_\Pi$ is self-adjoint, so its spectrum is real. The difference $\calL_0-\calL_\Pi$, acting on $\calA_c'$ is rank-one since its range is spanned by $\psi_\calQ$, and hence compact, so the Weyl essential spectrum theorem implies that $\sigma_\ess(\calL_\Pi) = \sigma_\ess(\calL_0)$.

Concerning the point spectrum of $\calL_\Pi$, the smallest eigenvalue $\alpha_0$ of $\calL_\Pi$ has the variational characterization
\begin{align}\label{e:character}
\alpha_0 = \inf_{\psi\in\calA_c'}\frac{\langle \calL\psi,\psi \rangle}{\langle \psi,\psi \rangle}.
\end{align}
The smallest eigenvalue $\lambda_0<0$ of $\calL$ has a similar formulation, with the infimum over $\psi\in\Yspace$. We deduce that $\lambda_0\leq\alpha_0$, since $\lambda_0$ is the infimum of the same functional over a larger space. Since $\psi_0$ has no zeros (cf.,~\cref{l:opL}) and $\psi_\calQ$ is of one sign over $\xi\in\RR$ (cf.,~\cref{r:sign}),
we have $\langle \psi_0,\psi_\calQ \rangle\neq 0$, which implies $\psi_0\notin\calA_c'$, and hence $\lambda_0<\alpha_0$. Indeed, a purpose of the constraint $\langle \cdot,\psi_\calQ\rangle=0$ is to eliminate the eigenfunction of $\calL$ associated with negative eigenvalue from the function space $\Yspace_0$. The above infimum is achieved at $\psi\in\calA_c'$, which solves the eigenvalue problem
\begin{align*}
\calL_\Pi\psi = \alpha_0\psi\quad \Rightarrow\quad \calL_0\psi = \alpha_0\psi + \nu\psi_\calQ,
\end{align*}
for some value of the multipliers $\alpha_0,\nu\in\RR$.
Since $\calL_0$ is invertible on $\Xspace_0$, we may solve for $\psi\in\Xspace_0$
\begin{align}\label{e:psi}
\psi(\alpha_0) = \nu(\calL_0-\alpha_0)^{-1}\psi_\calQ.
\end{align}

To enforce the condition $\psi\in\calA_c'$, we impose the constraint $\langle \psi,\psi_\calQ \rangle = 0$.
We use \cref{e:psi} to rewrite the constraint as a function of $\alpha_0$,
\begin{align*}
g(\alpha_0):= \frac{1}{\nu}\langle \psi,\psi_\calQ \rangle = \langle (\calL_0-\alpha_0)^{-1}\psi_\calQ,\psi_\calQ \rangle,
\end{align*}
so that $g(\alpha_0)=0$ if and only if $\alpha_0$ is an eigenvalue of $\calL_\Pi$. In particular, if $g(\alpha_0)$ is nonzero for all $\alpha_0\leq 0$, then the smallest eigenvalue of $\calL_\Pi$ must be positive. The function $g$ is analytic for $\alpha_0\notin\sigma(\calL_0)$, with pole singularities possible at the eigenvalues of $\calL_0$, and is strictly increasing where it is smooth, since
$g'(\alpha_0) = \langle (\calL_0-\alpha_0)^{-2}\psi_\calQ, \psi_\calQ \rangle = \|(\calL_0-\alpha_0)^{-1}\psi_\calQ\|_2^2>0$ using the Green's function for the resolvent.
The operator $\calL_0$ has no spectrum on $(\lambda_0,\lambda_1)$, where $\lambda_1>0$ is the smallest element of $\sigma(\calL_0)\setminus\{\lambda_0\}$. Consequently, the function $g$ is analytic and monotonically increasing on this interval. It follows that the first zero, $\alpha_0$, of $g$ is positive if $g(0)<0$, and this zero is the smallest eigenvalue of $\calL_\Pi$.

To relate the value of $g(0)$ to the derivative of the charge $\calQ(\mu)$ with respect to wave speed, we differentiate \cref{e:steadyLambda}
with respect to $c$\begin{align*}
\partial_c\frac{\delta\LG}{\delta m}(\mu) = \partial_c\left(- \frac{\delta\calH}{\delta m}(\mu) - c\frac{\delta \calQ}{\delta m}(\mu)  \right) = \calL\partial_c\mu - \psi_\calQ =0 \quad \Rightarrow\quad \calL\partial_c\mu = \psi_\calQ.
\end{align*}
We recall that $\langle \psi_\calQ, \mu_\xi \rangle=0$ in \cref{e:orthogonal}, thus $\psi_\calQ\in\Xspace_0$ and equivalently $\Pi_0\psi_\calQ=\psi_\calQ$.
Applying $\Pi_0$ to both sides of the above equation and inverting $\calL_0$ on $\Xspace_0$, yields
\begin{align*}
\calL_0\partial_c\mu = \psi_\calQ\quad \Rightarrow\quad \partial_c\mu = \calL_0^{-1}\psi_\calQ,
\end{align*}
This allows us to rewrite $g(0)$ as
\begin{align*}
g(0) = \langle \calL_0^{-1}\psi_\calQ,\psi_\calQ \rangle = \langle \partial_c\mu,\psi_\calQ \rangle = \partial_c \calQ(\mu).
\end{align*}
Hence $\partial_c \calQ(\mu)<0$ implies that $\alpha_0>0$.

Up to now, in the case $\alpha_0>0$ the variational characterization \cref{e:character} gives the $\Lspace^2$-coercivity
\begin{align*}
\langle \calL v, v \rangle \geq \alpha_0 \|v\|_2^2,\quad v\in\calA_c'.
\end{align*}
To obtain the $\Hspace^1$-coercivity we argue by contradiction. Assume the bilinear form $\langle \calL v, v \rangle$
is not $\Hspace^1$-coercive over $\calA_c'$, then for any $\eps>0$, there is a sequence of functions $\{v_n\}_{n=1}^\infty\subset\calA_c'\subset\Hspace^1(\RR)$ with $\|v_n\|_{\Hspace^1} = 1$ such that $\langle \calL v_n,v_n \rangle<\eps\|v_n\|_{\Hspace^1}^2$, i.e., $\langle \calL v_n,v_n \rangle\to0$ as $n\to\infty$. The $\Lspace^2$-coercivity $\langle \calL v_n,v_n \rangle\geq \alpha_0\|v_n\|^2_2$ implies that $\lim_{n\to\infty}\|v_n\|_2 = 0$, and since $\|v_n\|_{\Hspace^1} = 1$ we have
\begin{align*}
\lim_{n\to\infty}\langle \partial_\xi v_n, \partial_\xi v_n \rangle = 1.
\end{align*}
Recalling the form of $\calL$ in \cref{e:L} and suppressing the prefactor $c\kappa$, we have the limit
\begin{align*}
\lim_{n\to\infty}\langle -\partial_\xi(\mu^{-3}\partial_\xi v_n), v_n \rangle = \lim_{n\to\infty}\langle \mu^{-3}\partial_\xi v_n, \partial_\xi v_n \rangle \geq M_{c,\kappa}^{-3}\lim_{n\to\infty}\langle \partial_\xi v_n, \partial_\xi v_n \rangle = M_{c,\kappa}^{-3},
\end{align*}
where $\mu(\xi)\leq M_{c,\kappa}$ for $\xi\in\RR$, see \cref{r:rangemu} for more details of $M_{c,\kappa}$.
For the potential $W(\xi):= \mu^{-3} - 6\mu^{-5}\mu_\xi^2 + 3\mu^{-4}\mu_{\xi\xi} - (c\kappa\mu)^{-1}$ associated with $\calL$ we have the limit
\begin{align*}
\lim_{n\to\infty}|\langle W(\xi)v_n,v_n \rangle| \leq \|W\|_\infty\lim_{n\to\infty}\langle v_n,v_n \rangle = 0.
\end{align*}
These limits bring us to the contradiction, namely
\begin{align*}
0 = \lim_{n\to\infty}\langle \calL v_n,v_n \rangle = \lim_{n\to\infty}\left( \langle -\partial_\xi(\mu^{-3}\partial_\xi v_n) , v_n \rangle + \langle W(\xi)v_n,v_n \rangle\right) \geq M_{c,\kappa}^{-3}>0.
\end{align*}
Hence, we obtain the $H^1$-coercivity \cref{e:H1coercivity} with some $\alpha>0$.

\subsection*{Acknowledgments}
C. Liu and T. Long are  supported by the National Natural Science Foundation of China (No. 12171491). T. Long is supported by China Postdoctoral Science Foundation (No. GZC20230900). The authors thank St\'ephane Lafortune for the inspiring discussions and comments. We thank the anonymous reviewers for comments that helped to improve the manuscript.


\begin{thebibliography}{10}

\bibitem{CM2001}
A.~Constantin and L.~Molinet.
\newblock Orbital stability of solitary waves for a shallow water equation.
\newblock {\em Phys. D: Nonlinear Phenom.}, 157:75--89, 2001.

\bibitem{CS2000}
A.~Constantin and W.~A. Strauss.
\newblock Stability of peakons.
\newblock {\em Commun. Pure Appl. Math.}, 53:603--610, 2000.

\bibitem{CS2002}
A.~Constantin and W.~A. Strauss.
\newblock Stability of the {Camassa-Holm} solitons.
\newblock {\em J. Nonlinear Sci.}, 12:415--422, 2002.

\bibitem{Coppel}
W.~A. Coppel and L.~Gavrilov.
\newblock The period function of a {Hamiltonian} quadratic system.
\newblock {\em Differential Integral Equations}, 6(6):1357--1365, 1993.

\bibitem{DHH2002}
A.~Degasperis, D.~D. Holm, and A.~N.~W. Hone.
\newblock A new integrable equation with peakon solutions.
\newblock {\em Theoret. and Math. Phys.}, 133:1463--1474, 2002.

\bibitem{DHH2003}
A.~Degasperis, D.~D. Holm, and A.~N.~W. Hone.
\newblock Integrable and non-integrable equations with peakons.
\newblock In {\em Nonlinear Physics}, pages 37--43. 2003.

\bibitem{DLL2021}
H.~Di, J.~Li, and Y.~Liu.
\newblock Orbital stability of solitary waves and a {Liouville}-type property
  to the cubic {C}amassa-{H}olm-type equation.
\newblock {\em Phys. D: Nonlinear Phenom.}, (428):133024, 2021.

\bibitem{DGH2001}
H.~R. Dullin, G.~A. Gottwald, and D.~D. Holm.
\newblock An integrable shallow water equation with linear and nonlinear
  dispersion.
\newblock {\em Phys. Rev. Lett.}, 87:194501, 2001.


\bibitem{Ehrman}
B.~Ehrman and M.~A. Johnson.
\newblock Orbital stability of periodic traveling waves in the
  {b-Camassa–Holm} equation.
\newblock {\em Phys. D: Nonlinear Phenom.}, 461:134105, 2024.


\bibitem{Garijo}
A.~Garijo and J.~Villadelprat.
\newblock Algebraic and analytical tools for the study of the period function.
\newblock {\em J. Differential Equations}, 257(7):2464--2484, 2014.

\bibitem{Gasull}
A.~Gasull, A.~Guillamon, and J.~Villadelprat.
\newblock The period function for second-order quadratic {ODEs} is monotone.
\newblock {\em Qual. Theory Dyn. Syst.}, 4:329--352, 2004.


\bibitem{Geyer1}
A.~Geyer, R.~H. Martins, F.~Natali, and D.~E. Pelinovsky.
\newblock Stability of smooth periodic travelling waves in the {Camassa–Holm}
  equation.
\newblock {\em Stud. Appl. Math.}, 148(1):27--61, 2022.

\bibitem{Geyer2}
A.~Geyer and D.~E. Pelinovsky.
\newblock Stability of smooth periodic traveling waves in the
  degasperis-procesi equation.
\newblock {\em arXiv: 2210.03063}, 2022.


\bibitem{GSS1987}
M.~Grillakis, J.~Shatah, and W.~Strauss.
\newblock Stability theory of solitary waves in the presence of symmetry, {I}.
\newblock {\em J. Funct. Anal.}, 74(1):160--197, 1987.

\bibitem{Iva2007}
R.~I. Ivanov.
\newblock Water waves and integrability.
\newblock {\em Philos. Trans. R. Soc. Lond. Ser. A Math. Phys. Eng. Sci.},
  365(1858):2267--2280, 2007.

\bibitem{KP2015}
T.~Kapitula and K.~Promislow.
\newblock {\em Spectral and Dynamical Stability of Nonlinear Waves}.
\newblock Springer New York, NY, 2015.

\bibitem{LP2022}
S.~Lafortune and D.~E. Pelinovsky.
\newblock Stability of smooth solitary waves in the {b-Camassa-Holm} equation.
\newblock {\em Phys. D: Nonlinear Phenom.}, 440:133477, 2022.

\bibitem{LLLW2022}
J.~Li, C.~Li, C.~Liu, and D.~Wang.
\newblock The period function of reversible {Lotka-Volterra} quadratic centers.
\newblock {\em J. Differential Equations}, 307:556--579, 2022.

\bibitem{LLW2020}
J.~Li, Y.~Liu, and Q.~Wu.
\newblock Spectral stability of smooth solitary waves for the
  {Degasperis-Procesi} equation.
\newblock {\em Journal de Mathématiques Pures et Appliquées}, 142:298--314,
  2020.

\bibitem{LLW2023}
J.~Li, Y.~Liu, and Q.~Wu.
\newblock Orbital stability of smooth solitary waves for the
  {D}egasperis-{P}rocesi equation.
\newblock {\em Proc. Amer. Math. Soc.}, 151(1):151--160, 2023.

\bibitem{Lin}
Z.~Lin and Y.~Liu.
\newblock Stability of peakons for the {Degasperis-Procesi} equation.
\newblock {\em Commun. Pure Appl. Math.}, 62(1):125--146, 2009.

\bibitem{Liu2020}
X.~Liu.
\newblock Orbital stability of solitary wave solutions of
  {K}udryashov-{S}inelshchikov equation.
\newblock {\em Eur. Phys. J. Plus}, (135):804, 2020.

\bibitem{LL2023}
T.~Long and C.~Liu.
\newblock Orbital stability of smooth solitary waves for the b-family of
  {Camassa-Holm} equations.
\newblock {\em Phys. D: Nonlinear Phenom.}, 446:133680, 2023.

\bibitem{LLW2022}
T.~Long, C.~Liu, and S.~Wang.
\newblock The period function of quadratic generalized {Lotka-Volterra} systems
  without complex invariant lines.
\newblock {\em J. Differential Equations}, 314:491--517, 2022.

\bibitem{Molinet18}
L.~Molinet.
\newblock A {L}iouville property with application to asymptotic stability for
  the {C}amassa-{H}olm equation.
\newblock {\em Arch. Rational Mech. Anal.}, 230(1):185--230, 2018.

\bibitem{Molinet20}
L.~Molinet.
\newblock Asymptotic stability for some non positive perturbations of the
  {C}amassa-{H}olm peakon with application to the antipeakon-peakon profile.
\newblock {\em Int. Math. Res. Notices}, (21):7908--7943, 2020.

\bibitem{PW1994}
R.~L. Pego and M.~I. Weinstein.
\newblock Asymptotic stability of solitary waves.
\newblock {\em Commun. Math. Phys.}, 164(2):305--349, 1994.

\bibitem{Jordi}
J.~Villadelprat and X.~Zhang.
\newblock The period function of {Hamiltonian} systems with separable
  variables.
\newblock {\em J. Dynam. Differential Equations}, 32:741--767, 2020.

\bibitem{Zhao}
Y.~Zhao.
\newblock The monotonicity of period function for codimension four quadratic
  system {$Q_4$}.
\newblock {\em J. Differential Equations}, 185(1):370--387, 2002.

\end{thebibliography}

\end{document}